\newtheoremstyle{note}{} {}{\itshape}{-6pt}{\bf}{. --}{ }{}
\theoremstyle{note}
\newtheorem{theorem}{Theorem}
\newtheorem{proposition}{Proposition}[section]
\newtheorem{lemma}[proposition]{Lemma}
\newtheorem{assumption}{Assumption}
\numberwithin{equation}{section}
\newenvironment{rem}{{\noindent\bf Remark }}{}
\def\begi{\begin{itemize}}
	\def\endi{\end{itemize}}
\DeclareMathOperator{\dist}{dist}
\DeclareMathOperator{\vol}{vol}
\DeclareMathOperator{\Div}{div}
\title[Spectral estimate on the hyperbolic half-plane]{Spectral estimate for the Laplace--Beltrami operator on the hyperbolic half-plane}
\author{Marc Rouveyrol}
\address{Université Paris-Saclay, CNRS, Laboratoire de mathématiques d’Orsay, 91405, Orsay, France}
\email{marc.rouveyrol@universite-paris-saclay.fr}
\begin{document}

\begin{abstract}
The purpose of this note is to investigate the concentration properties of spectral projectors on manifolds.
This question has been intensively studied (by Logvinenko--Sereda, Nazarov, Jerison--Lebeau, Kovrizhkin, Egidi--Seelmann--Veseli{\'c}, Burq--Moyano, among others) in connection with the uncertainty principle. 
We provide the first high-frequency results in a geometric setting which is neither Euclidean nor a perturbation of Euclidean.
Namely, we prove the natural (and optimal) uncertainty principle for the spectral projector on the hyperbolic half-plane.
\end{abstract}

\keywords{spectral estimate, propagation of smallness, hyperbolic half-plane, heat equation}

\maketitle
\tableofcontents

\newpage

\section{Introduction}

\subsection{Assumption and main result}

In the present note, we prove spectral estimates from thick sets on the hyperbolic half-plane $\mathbb{H}^2 = \{(x,y) \in \mathbb{R}^2, y > 0\}$, endowed with its usual metric $g = \frac{1}{y^2}(dx^2 + dy^2)$. The Laplace--Beltrami operator on this manifold is $\Delta_g = y^2 \left(\partial_x^2 + \partial_y^2\right)$, and the Riemannian measure corresponding to the metric $g$ is $\frac{dxdy}{y^2}$. Recall that the operator $-\Delta_g$ on $L^2(\mathbb{H}^2, \frac{dxdy}{y^2})$ with domain $H^2_g(\mathbb{R}^d)$ is self-adjoint with spectrum $[\frac{1}{4},+\infty)$ (we refer to \cite{hebey1996sobolev,grigor'yan2009book} for definitions and properties of Sobolev spaces on Riemannian manifolds and to \cite{bergeron2011livre}, \cite[Chapter 4]{borthwick2016book} for spectral theory of the hyperbolic Laplacian).
Thus, by the spectral theorem, it is possible to write $$u = \int_\frac{1}{4}^{+\infty} dm_\lambda u, \quad \Pi_\Lambda u = \mathbbm{1}_{\sqrt{-\Delta_g} \leq \Lambda} u =\int_\frac{1}{4}^\Lambda dm_\lambda u,$$ where $dm_\lambda$ denotes the spectral measure of the operator $\sqrt{-\Delta_g}$ defined by functional calculus. For $\phi, \varphi$ two bounded, real-valued functions over $\mathbb{R}$, one has \begin{multline*}\phi(\sqrt{-\Delta_g})u = \int_\frac{1}{4}^{+\infty}\phi(\lambda)dm_\lambda u, \\ \langle \phi(\sqrt{-\Delta_g})u, \varphi(\sqrt{-\Delta_g})u \rangle_{L^2\left(\mathbb{H}^2;\frac{dxdy}{y^2}\right)} = \int_\frac{1}{4}^{+\infty} \phi(\lambda) \varphi(\lambda) (dm_\lambda u, u)\end{multline*} so that \begin{equation}
    \label{eq: functional calculus bound}
    \|\phi(\sqrt{-\Delta_g})\Pi_\Lambda u\|_{L^2\left(\mathbb{H}^2; \frac{dxdy}{y^2}\right)} \leq \sup_{\lambda \in [\frac{1}{4},\Lambda]} |\phi(\lambda)| \|\Pi_\Lambda u\|_{L^2\left(\mathbb{H}^2;\frac{dxdy}{y^2}\right)}.
\end{equation}

Geodesic balls for the metric $g$ are given by \begin{equation}
    \label{eq: geodesic balls}
    \forall (x,y) \in \mathbb{H}^2, r > 0, \quad B^g_r(x,y) = B_{y\sinh(r)}(x,y\cosh(r)) 
\end{equation} where $B_r^g$ denotes a geodesic ball of center $(x,y)$ and radius $r$ and $B_{y\sinh(r)}$ a Euclidean ball. Throughout the paper, we will consider a subset $\omega$ of $\mathbb{H}^2$ which satisfies the following assumption:

\begin{assumption}
    \label{ass: geodesic balls}
    There exist $\delta, R > 0$ such that for any $(x,y) \in \mathbb{H}^2$, there holds $$\vol_g(\omega \cap B^g_R(x,y)) = \int_{\omega \cap B^g_R(x,y)} \frac{dxdy}{y^2} \geq \delta.$$
\end{assumption}

It essentially states that the measure of the set $\omega$ is large enough and sufficiently spread out so as to be uniformly bounded from below in every geodesic ball, \emph{i.e.} that $\omega$ is a \emph{thick set} for the Riemannian metric and density (see \cite{logvinenkosereda1974equivalent, kovrizhkin2001relatedtoLog-Ser}). Note that in the literature, this lower usually concerns the ratio $\frac{\vol_g(\omega \cap B^g_R(x,y))}{\vol_g(B^g_R(x,y))}$. Since all balls of radius $R$ have equal volume in our setting, we omit this divider by including it in $\delta$.

Our main result is that this assumption is equivalent to a spectral estimate from set $\omega$, as follows.

\begin{theorem}
    \label{theo: main result}
    Consider some Borel set $\omega \subset \mathbb{H}^2$ and define the spectral projector $\Pi_\Lambda$ as above, then the following conditions are equivalent: \begin{enumerate}
        \item $\omega$ satisfies Assumption \ref{ass: geodesic balls} for some $\delta, R > 0$;
        \item there exists some constant $C(\delta, R) > 0$ such that for any $\Lambda \geq \frac{1}{4}, u \in L^2\left(\mathbb{H}^2, \frac{dxdy}{y^2}\right)$, \begin{equation}
        \label{eq: main result estimate}
        \|\Pi_\Lambda u\|_{L^2\left(\mathbb{H}^2;\frac{dxdy}{y^2}\right)} \leq C e^{C\Lambda} \|\Pi_\Lambda u\|_{L^2\left(\omega; \frac{dxdy}{y^2}\right)}.
    \end{equation}
    \end{enumerate}
\end{theorem}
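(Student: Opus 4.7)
The implication $(2) \Rightarrow (1)$ is the easier direction, which I prove by contradiction. Suppose (2) holds but Assumption \ref{ass: geodesic balls} fails for some fixed $R > 0$, giving a sequence $(x_n, y_n)$ with $\vol_g(\omega \cap B_R^g(x_n, y_n)) \to 0$. For $\Lambda$ large enough, I produce quasimodes $u_n$ in the range of $\Pi_\Lambda$ with $\|u_n\|_{L^2} = 1$, essentially concentrated in $B^g_R(x_n, y_n)$ and of bounded $L^\infty$ norm --- for instance by applying $\Pi_\Lambda$ to the heat kernel $p_{1/\Lambda^2}((x_n, y_n), \cdot)$ and normalizing. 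Inserting these into \eqref{eq: main result estimate} forces $\vol_g(\omega \cap B^g_R(x_n, y_n))$ to stay bounded below, a contradiction. Transitivity of $PSL(2,\mathbb{R})$ acting by isometries on $\mathbb{H}^2$ makes the construction uniform in the base point.

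For $(1) \Rightarrow (2)$, I follow the Lebeau--Robbiano strategy via the Poisson (elliptic) extension. Given $u = \Pi_\Lambda u$, set $v(t, \cdot) := e^{-t\sqrt{-\Delta_g}} u$ for $t \geq 0$. By functional calculus, $v$ is harmonic on the Riemannian product $\mathbb{R}_+ \times \mathbb{H}^2$ (metric $dt^2 \oplus g$), satisfies $v(0, \cdot) = u$, and the spectral cutoff yields
$$e^{-t\Lambda}\|u\|_{L^2(\mathbb{H}^2)} \leq \|v(t, \cdot)\|_{L^2(\mathbb{H}^2)} \leq \|u\|_{L^2(\mathbb{H}^2)}, \qquad t \geq 0.$$
The analytic core is a local propagation of smallness for $v$, of three-ball / doubling type, obtained via a Carleman estimate or a frequency-function monotonicity argument on the product manifold. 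This produces, uniformly in $(x_0, y_0)$ --- uniformity coming again from transitivity of the isometry group --- an estimate
$$\|v\|_{L^2(B^g_R(x_0, y_0) \times [1, 2])} \leq C\|v\|_{L^2((\omega \cap B^g_R(x_0, y_0)) \times [0, 3])}^\alpha \|v\|_{L^2(B^g_{2R}(x_0, y_0) \times [0, 3])}^{1-\alpha}.$$

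Integrating in $t$ and using the two-sided spectral comparison of $\|v(t,\cdot)\|_{L^2}$ with $\|u\|_{L^2}$ translates this into a local observability for $u$:
$$\|u\|_{L^2(B^g_R(x_0, y_0))} \leq C'e^{C'\Lambda}\|u\|_{L^2(\omega \cap B^g_R(x_0, y_0))}^\alpha \|u\|_{L^2(\mathbb{H}^2)}^{1-\alpha}.$$
The linear dependence $C'\Lambda$ in the exponent comes precisely from inverting the lower bound $\|v(t, \cdot)\|_{L^2} \geq e^{-t\Lambda}\|u\|_{L^2}$ at some time $t$ of order one. To aggregate, I cover $\mathbb{H}^2$ by a uniformly locally finite family of balls $B^g_R(x_k, y_k)$ (possible thanks to bounded geometry), sum the local inequalities, use Assumption \ref{ass: geodesic balls} together with H\"older to bound the global right-hand side, and invoke a Young-type inequality to absorb the $\|u\|_{L^2(\mathbb{H}^2)}^{1-\alpha}$ factor. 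This delivers \eqref{eq: main result estimate}.

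The \emph{main obstacle} is the local propagation of smallness. Away from Euclidean space, Carleman estimates for the harmonic extension require careful bookkeeping of the geometry, and tracking the precise linear exponent $C\Lambda$ (rather than $C\Lambda\log\Lambda$ or worse) is delicate. Homogeneity of $(\mathbb{H}^2, g)$ under $PSL(2,\mathbb{R})$ is crucial: it allows reducing to a fixed reference configuration in which the Carleman weight and test cutoffs can be chosen once and for all, yielding constants uniform across $\mathbb{H}^2$ and the desired exponent.
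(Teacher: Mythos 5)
Your proof of $(2)\Rightarrow(1)$ takes a genuinely different route from the paper: the paper first upgrades \eqref{eq: main result estimate} to a final-time observability inequality for the heat equation (Lemma \ref{lemma: observability}) and then tests it on the heat kernel itself, using the Gaussian upper and lower bounds of Lemma \ref{lemma: gaussian bounds heat kernel}; you instead test the spectral inequality directly on frequency-localized bump functions. Your argument is viable in outline: fix one $\Lambda$, note that by homogeneity of $\mathbb{H}^2$ the normalization, the $L^\infty$ bound and the $L^2$ tail of $\Pi_\Lambda p_{t}(z_0,\cdot)$ outside $B^g_R(z_0)$ are independent of $z_0$, and then choose $R$ so large that the tail is small compared with the (fixed) constant $Ce^{C\Lambda}$ --- this is legitimate since thickness is only needed for \emph{some} $R$. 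Spelled out, this is a shorter proof of the necessary condition than the paper's.

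The direction $(1)\Rightarrow(2)$, however, has a genuine gap, and it sits exactly where the paper's main work is. First, your ``small'' set is the cylinder $(\omega\cap B^g_R)\times[0,3]$, i.e.\ it involves $v(t,\cdot)=e^{-t\sqrt{-\Delta_g}}u$ at positive times. The semigroup is nonlocal, so $\|v\|_{L^2((\omega\cap B)\times[0,3])}$ is \emph{not} controlled by $\|\Pi_\Lambda u\|_{L^2(\omega)}$ (take $u$ vanishing on $\omega$ but not identically: then the right-hand side of \eqref{eq: main result estimate} vanishes while your observation term does not). Hence even if your three-cylinder interpolation inequality were available, the chain of estimates would not produce \eqref{eq: main result estimate}: the observation must live on the slice $\{t=0\}$, where alone the data reduce to $u|_\omega$. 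This is precisely why the paper replaces the Poisson extension by the $\sinh$-extension $v_\Lambda$, for which $v_\Lambda(0,\cdot)=0$ and $\partial_t v_\Lambda(0,\cdot)=\Pi_\Lambda u$, so that the full space-time gradient restricted to $\{t=0\}\times F_{j,k}$ is exactly $\Pi_\Lambda u$.

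Second, once the observation set is merely measurable (whether a positive-measure subset of the hyperplane $\{t=0\}$ or your cylinder), a ``Carleman estimate or frequency-function monotonicity argument'' does not yield the propagation of smallness you state: those tools propagate from open balls, and quantitative propagation from sets of positive measure (a fortiori from sets lying in a hypersurface) is exactly the deep input the paper imports from Logunov--Malinnikova (Theorem \ref{theo: logunov-malinnikova}, applied to $\nabla_{t,X,Y}V_\Lambda^{j,k}$ after rescaling each $\mathcal{R}_{j,k}$ to get uniform ellipticity constants). Describing this step as geometric bookkeeping understates the missing ingredient; homogeneity of $\mathbb{H}^2$ does not substitute for it. Finally, your aggregation step does not close as written: each local inequality carries the global factor $\|u\|_{L^2(\mathbb{H}^2)}^{1-\alpha}$, and over an infinite cover $\sum_k\|u\|_{L^2(\omega\cap B_k)}^{2\alpha}$ is not bounded by $\|u\|_{L^2(\omega)}^{2\alpha}$ when $\alpha<1$. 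The paper avoids this by keeping the large factor local ($\sup_\Omega|\nabla_{t,X,Y}V_\Lambda^{j,k}|$), applying Young's inequality, and summing that factor through the Sobolev-injection Lemma \ref{lemma: sobolev injection}, which is also where the admissible loss $e^{K\Lambda}$ is produced.
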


\subsection{Some bibliography on spectral estimates}

The first equivalence between a spectral estimate (formulated as an uncertainty principle for the Fourier transform) and a thickness condition on the observability set was obtained by Logvinenko and Sereda using complex analysis methods \cite{logvinenkosereda1974equivalent} (see also Zygmund \cite[pp.202-208] {zygmund2002trigonometricseries}, Nazarov \cite{nazarov1994local} and Kovrizhkin \cite{kovrizhkin2001relatedtoLog-Ser, kovrizhkin2003uncertainty}). The idea of using propagation of smallness inequalities for harmonic functions to prove these estimates can be traced back to Jerison and Lebeau \cite{jerisonlebeau1999nodal}, with applications to bounds on the Hausdorff measure of nodal sets.

These spectral estimates have had fruitful applications to controllability (or equivalently observability) of the heat equation and of parabolic systems since the works of Lebeau, Robbiano and Zuazua \cite{lebeaurobbiano1995controlechaleur, lebeauzuazua1998thermoelasticity}. In \cite{miller2010directlebeaurobbiano}, Miller provided an abstract framework for proving observability inequalities at optimal cost for parabolic systems. Phung and G. Wang generalized these observability estimates to any positive-measure time set using a telescoping series method \cite{phungwang2013observability}, and Apraiz, Escauriaza, G. Wang and C. Zhang \cite{apraizescauriaza2013nullcontrol, apraizescauriazawangzhang2014observability} proved observability from any positive measure time-space set for the heat equation on bounded, star-shaped domains.

On unbounded domains, Egidi and Veseli{\'c} \cite{egidiveselic2018controlheat} and C. Wang, M. Wang, C. Zhang and Y. Zhang \cite{wangwangzhangzhang2019spectralineq_heat} simultaneously proved the equivalence between observability of the (constant-coefficient) heat equation and thickness of the observation set on the whole space. The former used Kovrizhkin's formulation of the Logvinenko--Sereda theorem \cite{kovrizhkin2000fourier,kovrizhkin2001relatedtoLog-Ser}, while the latter's proof is based on the equivalence with Jerison--Lebeau type estimates for spectral projectors and the telescopic series method of \cite{phungwang2013observability}. The result of \cite{wangwangzhangzhang2019spectralineq_heat} can be seen as the flat-Laplacian equivalent of Theorem \ref{theo: main result}. We refer to the works of Beauchard, Jaming, Martin and Pravda-Starov \cite{beauchardjamingpravda-starov2021spectral, martinpravda-starov2023spectral}, Egidi, Seelmann and their collaborators \cite{egidiseelmann2021Logv-Seredatype, egidinakicseelmann+2020heatequation} and the references therein for further spectral estimate and parabolic controllability results from thick sets.

Studies about the interaction between spectral estimates for the Laplacian and the ambient geometry are more scarce in the literature. In \cite{lebeaumoyano2019spectralinequalities}, Lebeau and Moyano proved spectral estimates for Schrödinger operators with analytic potentials, when the metric is an analytic perturbation of the identity. More recently, Burq and Moyano proved spectral estimates for Laplace operators on compact, $W^{2,\infty}$ manifolds with Lipschitz metric and density \cite{burqmoyano2023propagationheat}, combining ideas of Jerison and Lebeau \cite{jerisonlebeau1999nodal} with propagation of smallness results of Logunov and Malinnikova \cite{logunovmalinnikova2018qtttve_propag_smallness}. They then showed that the thickness condition for subsets of $\mathbb{R}^d$ implies spectral projector estimates for uniformly elliptic Laplacians with Lipschitz metric and density \cite{burqmoyano2021spectralestimates}.

The aforementioned propagation of smallness result \cite[Theorem 5.1]{logunovmalinnikova2018qtttve_propag_smallness} is a three-sphere type estimate for the gradients of solutions of homogeneous elliptic equations in divergence form. It plays a central role in the present article. A remarkable feat of this result is that the smaller set in the estimate is of Hausdorff dimension smaller than $d-1$, where $d$ denotes the dimension of the full space. We refer to \cite{logunov2018nodalpolynomial,logunov2018nodalnadirashvili,logunovmalinnikova2018qtttve_propag_smallness} for details concerning the spirit and proofs of this result. For the convenience of the reader, a statement of the estimate is given in Appendix \ref{sec: Appendix Logunov}.

To the best of our knowledge, the first spectral estimate results on non-compact manifolds with a geometry that is not a perturbation of the Euclidean one are those of Rose and Tautenhahn \cite{rosetautenhahn2023unique} on manifolds with Ricci curvature bounded from below. The main difference with Theorem \ref{theo: main result} is that their estimate holds for values of $\Lambda$ below a certain threshold given by the geometry.

The main contribution of the present work is to give the first (again to the best of our knowledge) example of a high-frequency spectral estimate from thick sets on a non-compact manifold with a geometry that is not a perturbation of the Euclidean one. The proof is inspired by the works of Burq--Moyano \cite{burqmoyano2023propagationheat, burqmoyano2021spectralestimates}, in which the link between the Logunov--Malinnikova inequality and spectral estimates was established, and the article \cite{wangwangzhangzhang2019spectralineq_heat}, where the authors use an observability inequality to recover a thickness condition on the observation set.

\subsection{Scheme of the proof}

The rest of the paper is dedicated to proving Theorem \ref{theo: main result}. In the proof that Assumption \ref{ass: geodesic balls} implies the spectral estimate, it is more natural to consider Euclidean rectangles than geodesic balls, hence the following remark: 

\medskip

\begin{rem}
Consider $R'$ given by the one-to-one identity $$\tanh(R) = \min\left(1-2^{-R'}, \left(\frac{3}{2}\right)^{R'} - 1\right),$$ where $R$ is given by Assumption \ref{ass: geodesic balls}. Then, every rectangle $$\mathcal{R}_{j,k}(R'):= (2^{R'j} k - 2^{R'j}, 2^{R'j}k + 2^{R'j})_x \times (2^{R'(j-1)}, 3^{R'} \times 2^{R'(j-1)})_y,$$ contains some geodesic ball of diameter $R$. Precisely, \eqref{eq: geodesic balls} gives \begin{equation} \label{eq: rectangle contains hyp ball}
    B^g_R\left(2^{R'j}k, \frac{2^{R'j}}{\cosh(R)}\right) \subset \mathcal{R}_{j,k}(R').
\end{equation}
\end{rem} By \eqref{eq: rectangle contains hyp ball}, Assumption \ref{ass: geodesic balls} implies the following assumption, which is the one used in the proof:

\begin{assumption}
    \label{ass: Cjk}
    There exists some $\delta > 0$ such that for any $(j,k) \in \mathbb{Z}^2$, there holds $$\int_{\omega \cap \mathcal{R}_{j,k}(R')} \frac{dxdy}{y^2} \geq \delta.$$
\end{assumption}

It is easily checked that \begin{equation}\label{eq: covering} \mathbb{H}^2 \subset \bigcup_{(j,k) \in \mathbb{Z}^2} \mathcal{R}_{j,k}\end{equation} and that for any $x \in \mathbb{H}^2$, there holds $$\sharp \{(j,k), x \in \mathcal{R}_{j,k}\} \leq N$$ for some $N(R')$ independent of $x$. Thus, \begin{equation}
    \label{eq: equivalence of norms}
    \|.\|_{L^2(\mathbb{H}^2; \frac{dxdy}{y^2})}^2 \leq \sum_{j,k} \|.\|_{L^2(\mathcal{R}_{j,k}; \frac{dxdy}{y^2})}^2 \leq N \|.\|_{L^2(\mathbb{H}^2; \frac{dxdy}{y^2})}^2.
\end{equation}

The strategy of proof of the sufficient condition (namely that Assumption \ref{ass: geodesic balls} implies the spectral estimate) adapts Burq and Moyano's \cite{burqmoyano2023propagationheat} on the whole space: we apply Logunov and Malinnikova's propagation of smallness inequality to a well-chosen harmonic function $v_\Lambda(x,y,t)$, with $t$ an auxiliary variable and $(x,y)$ localized inside each $\mathcal{R}_{j,k}$. We then use a Sobolev injection-type lemma to recover the spectral projector inequality \eqref{eq: main result estimate}. The main novelty arises from the fact that the operator $-\Delta_g = y^2(\partial_x^2 + \partial_y^2)$ is not uniformly elliptic over $\mathbb{H}^2$, hence the covering $(\mathcal{R}_{j,k})$ of the half-plane is chosen so that the metric $g$ does not vary too much over each subset. Besides, the multiplicative constant in Logunov and Malinnikova's estimate depends on the ellipticity and Lipschitz constants of the operator considered. To avoid dealing with this dependence, we perform a change of coordinates over every subset $\mathcal{R}_{j,k}$ to obtain constants uniform with respect to $(j,k)$.

The proof of the necessary condition is inspired by similar work by G. Wang, M. Wang, C. Zhang and Y. Zhang for the flat Laplacian on $\mathbb{R}^d$. In \cite{wangwangzhangzhang2019spectralineq_heat}, they deduce thickness of the set $\omega$ from a final-time observability estimate that has been known to derive from spectral projector estimates since the works of Miller \cite{miller2010directlebeaurobbiano} and Phung and G. Wang \cite{phungwang2013observability}. We adapt this strategy to our framework using upper and lower Gaussian (in terms of the geodesic distance) bounds for the hyperbolic heat kernel.

The next section is split into three parts. In the first two, we prove that Assumption \ref{ass: Cjk} implies the spectral inequality \eqref{eq: main result estimate}. In Subsection \ref{sec: proof main result}, we apply the Logunov--Malinnikova and Sobolev-type estimates to prove \eqref{eq: main result estimate}. Subsection \ref{sec: proof Sobolev injection} is dedicated to proving the Sobolev injection lemma. In subsection \ref{sec: necessary condition}, we show that the spectral inequality \eqref{eq: main result estimate} (or rather its corollary, observability inequality \eqref{eq: observability}) implies Assumption \ref{ass: geodesic balls}. The appendices do not contain any new mathematical content and are given for the sake of self-containedness. In Appendix \ref{sec: Appendix Logunov}, we state the Logunov--Malinnikova inequality which is used in subsection \ref{sec: proof main result}. In Appendix \ref{sec: Appendix observability}, we reproduce a proof that the spectral estimate \eqref{eq: main result estimate} implies observability inequality \eqref{eq: observability}, following \cite{wangwangzhangzhang2019spectralineq_heat}.

We conclude this introduction with a few notations. Throughout the rest of the article, we denote $z=(x,y)$ a generic element in the hyperbolic half-plane $\mathbb{H}^2$, $d_g(z,z')$ the geodesic distance between two points $z, z' \in \mathbb{H}^2$ and $d\vol_g:= d\vol_g(z) = \frac{dxdy}{y^2}$ the Riemannian measure corresponding to the metric $g$. We denote by $\|.\|_{L^2_g}$ norms involving the measure $d\vol_g(z)$ (or the measure $dtd\vol_g(z)$ when also integrating over the auxiliary variable $t$). Norms denoted $\|.\|_{L^2}$ without the index $g$ are integrated using Lebesgue measure. The symbol $\lesssim$ means that the left-hand-side is bounded by the right-hand-side up to an implicit multiplicative constant which is independent of relevant quantities. The dependence of implicit constants is specified when deemed necessary. $A \simeq B$ means that both $A \lesssim B$ and $B \lesssim A$ hold.

\section{Proof of Theorem \ref{theo: main result}}

\subsection{Proof of the spectral estimate}
\label{sec: proof main result}

We prove inequality \eqref{eq: main result estimate} under Assumption~\ref{ass: Cjk}. For the sake of simplicity, the proof is written for $R' = 1$. It can be adapted easily to the case $R' > 0$ by adding appropriate $R'$ exponents in the bounds of the rectangles considered.

Take $u \in L^2_g(\mathbb{H}^2)$ and define $$v_\Lambda(t, x, y) = \int_{\lambda = 0}^\Lambda \frac{\sinh(\lambda t)}{ \lambda} dm_\lambda u.$$ Then $-\Delta_g v_\Lambda = \int_0^\Lambda \frac{\sinh(\lambda t)}{\lambda} \lambda^2 dm_\lambda u$ so that $$(\partial_t^2 + \Delta_g) v_\Lambda = 0.$$

Recalling that $\mathcal{R}_{j,k} = \{(x,y) \in \mathbb{H}^2, |x - 2^j k| < 2^j, 2^{j-1} < y < 3 \times 2^{j-1} \}$ for any $(j, k) \in \mathbb{Z}^2$ and $\mathbb{H}^2 \subset \cup_{j,k} \mathcal{R}_{j,k}$, let $1 = \sum_{(j,k) \in \mathbb{Z}^2} \chi_{j,k}$ be a partition of unity associated to this covering.  Over each rectangle $\mathcal{R}_{j,k}$, we perform the change of variables $$(X, Y) = \phi_{j,k}(x,y) = (2^{-j}x - k, 2^{-j}y)$$ so that $(X, Y)$ spans the rectangle $\mathcal{R} = (-1, 1)_X \times (\frac{1}{2}, \frac{3}{2})_Y$ when $(x, y)$ spans $\mathcal{R}_{j,k}$. This change of coordinates preserves the hyperbolic Laplacian: $$\Delta_g = y^2(\partial_x^2 + \partial_y^2) = 2^{2j}Y^2(2^{-2j}\partial_X^2 + 2^{-2j}\partial_Y^2) = Y^2(\partial_X^2 + \partial_Y^2).$$ We then define $V_\Lambda^{j,k}(t, X, Y) = v_\Lambda(t, \phi_{j,k}^{-1}(X,Y))$, which satisfies $$(\partial_t^2 + Y^2 \Delta_{X,Y}) V_\Lambda^{j,k}(t, X, Y) = (\partial_t^2 + \Delta_g) v_\Lambda(t, x, y) = 0, \quad (X, Y) \in \mathcal{R}.$$

Multiplying by $\frac{1}{Y^2}$, we get $$(\partial_t \frac{1}{Y^2} \partial_t + \Delta_{X,Y})V_\Lambda^{j,k} = 0 \text{ over } \mathcal{R},$$ which in turn can be written $\Div(A \nabla_{t,X,Y} V_\Lambda^{j,k}) = 0$ for $$A(t,X,Y) = \begin{pmatrix}
    \frac{1}{Y^2} & 0 & 0 \\
    0 & 1 & 0 \\
    0 & 0 & 1
\end{pmatrix}.$$ It is clear that for any $T > 0$, $A$ is uniformly elliptic with Lipschitz coefficients over the box $(-T, T)_t~\times~\mathcal{R}~=~(-T, T)_t~\times~(-1, 1)_X~\times~(\frac{1}{2}, \frac{3}{2})_Y$, and its ellipticity and Lipschitz constants are independent of $T, j, k$.

Fix $0 < T_1 < T_2$ and $\mathcal{R}_2 = (-2, 2)_X \times (\frac{1}{4}, \frac{7}{4})_Y$. We denote \begin{multline}
    \label{eq: logunov-malinnikova sets (t, X, Y)} \mathcal{K} = (-T_1, T_1) \times \mathcal{R}, \Omega = (-T_2, T_2) \times \mathcal{R}_2, F_{j,k} = \phi_{j,k}(\omega) \cap \mathcal{R}, E_{j,k} = \{0\} \times F_{j,k}.
\end{multline} Then by Assumption \ref{ass: Cjk} $$\int_{\omega \cap \mathcal{R}_{j,k}} \frac{dxdy}{y^2} = \int_{\phi_{j,k}(\omega) \cap \mathcal{R}} \frac{dXdY}{Y^2} \geq \delta.$$ Thus, \begin{equation}
\label{eq: lower bound after scaling}
\vol_{euc}(\phi_{j,k}(\omega) \cap \mathcal{R}) \geq \frac{\delta}{4}\end{equation} uniformly with respect to $j,k$. By Logunov and Malinnikova's theorem \cite[Theorem 5.1]{logunovmalinnikova2018qtttve_propag_smallness}, the statement of which is recalled in Appendix \ref{sec: Appendix Logunov}, we get \begin{equation}
    \label{eq: logunov malinnikova L infty bound}
    \sup_{\mathcal{K}} |\nabla_{t,X,Y} V_\Lambda^{j,k}| \leq C \sup_{E_{j,k}} |\nabla_{t,X,Y} V_\Lambda^{j,k}|^\alpha \sup_{\Omega} |\nabla_{t,X,Y} V_\Lambda^{j,k}|^{1-\alpha}.
\end{equation}  The constants $C > 0, 0 < \alpha < 1$ depend on $\delta$ but do not depend on $u$, $\Lambda$, $j$ or $k$.

We change the $L^\infty$ norms to $L^2$ norms using a trick taken from \cite[Section 2]{burqmoyano2021spectralestimates}. Assume that $\nabla V_\Lambda^{j,k}|_\mathcal{K}$ is not identically zero and let $$a = \left( \epsilon \frac{\sup_{\mathcal{K}} |\nabla_{t,X,Y} V_\Lambda^{j,k}|}{\sup_{\Omega} |\nabla_{t,X,Y} V_\Lambda^{j,k}|^{1-\alpha}} \right)^\frac{1}{\alpha}, \quad F'_{j,k} = \{(X,Y) \in F_{j,k}, |\nabla_{t,X,Y} V_\Lambda^{j,k}(t=0, X, Y)| \leq a\}.$$ Let $E'_{j,k} = \{0\} \times F'_{j,k}$. One necessarily has $\vol_{euc}(F'_{j,k}) \leq \frac{\delta}{8}$. Indeed, otherwise, the Logunov--Malinnikova estimate \eqref{eq: logunov malinnikova L infty bound} holds with identical constants but replacing $E_{j,k}$ with $E'_{j,k}$. This gives \begin{equation}
    \sup_{\mathcal{K}} |\nabla_{t,X,Y} V_\Lambda^{j,k}| \leq C a^\alpha  \sup_\Omega |\nabla_{t,X,Y} V_\Lambda^{j,k}|^{1-\alpha} \leq C\epsilon \sup_{\mathcal{K}} |\nabla_{t,X,Y} V_\Lambda^{j,k}|.
\end{equation} For $C\epsilon < 1$, we obtain that $\nabla_{t,X,Y} V_\Lambda^{j,k}|_\mathcal{K}$ is identically zero, hence a contradiction.

As a consequence, \begin{multline} \label{eq: L2 bound from below} \int_{F_{j,k}} |\nabla_{t,X,Y} V_\Lambda^{j,k}(t=0, X, Y)|^2 dX dY \geq \int_{F_{j,k} \setminus F'_{j,k}} |\nabla V_\Lambda^{j,k}(t=0, X, Y)|^2 dXdY \\ \geq a^2 \frac{\delta}{8} \geq \frac{\delta}{8} \left( \epsilon \frac{\sup_{\mathcal{K}} |\nabla_{t,X,Y} V_\Lambda^{j,k}|}{\sup_{\Omega} |\nabla_{t,X,Y} V_\Lambda^{j,k}|^{1-\alpha}} \right)^\frac{2}{\alpha}.\end{multline}

Since $\nabla_{t,X,Y} V_\Lambda^{j,k}(t=0) = \begin{pmatrix}\Pi_\Lambda u \circ \phi_{j,k}^{-1} \\ 0 \\ \vdots \\ 0\end{pmatrix}$,
denoting $U_\Lambda^{j,k} = \Pi_\Lambda u \circ \phi_{j,k}^{-1}$, we get $$\begin{aligned}\int_\mathcal{R} |U_\Lambda^{j,k}(X,Y)|^2 dXdY &\leq \sup_{t \in (-T_1, T_1)} \int_{\mathcal{R}} |\nabla_{t,X,Y} V_\Lambda^{j,k}|^2 dXdY \\ &\lesssim \sup_{\mathcal{K}} |\nabla_{t,X,Y} V_\Lambda^{j,k}|^2 \\ &\lesssim \left( \int_{F_{j,k}} |\nabla_{t,X,Y} V_\Lambda^{j,k}(t=0, X, Y)|^2 dX dY\right)^\alpha \sup_{\Omega}|\nabla_{t,X,Y} V_\Lambda^{j,k}|^{2-2\alpha} \\ & \lesssim \left( \int_{F_{j,k}} |U_\Lambda^{j,k}(X, Y)|^2 dX dY\right)^\alpha \sup_{\Omega}|\nabla_{t,X,Y} V_\Lambda^{j,k}|^{2-2\alpha}. \end{aligned}$$ The implicit multiplicative constant in the last inequality depends only on $\delta, \epsilon, T_1, T_2$ and numerical constants, and does not depend on $\Lambda, u, j, k$. Note that in the case where $R'$ (see Assumption \ref{ass: Cjk} and the preliminary remark of this section) is fixed and different from 1, this constant also depends on $R'$ through the lower bound in \eqref{eq: lower bound after scaling}, the definitions of $\mathcal{R}$ and $\mathcal{R}_2$, and the constants $\Lambda_1, \Lambda_2, \rho$ of Theorem \ref{theo: logunov-malinnikova}.

Recall that since $y$ is roughly of size $2^j$ over $\mathcal{R}_{j,k}$, we have for any function $f \in L^2(\mathcal{R})$, $$\int_\mathcal{R} |f(X, Y)|^2 dXdY = \int_{\mathcal{R}_{j,k}} 2^{-2j} |f \circ \phi_{j,k}(x,y)|^2 dxdy \simeq \int_{\mathcal{R}_{j,k}} |f \circ \phi_{j,k}(x,y)|^2 \frac{dxdy}{y^2}.$$ Applying this to $f = U_\Lambda^{j,k}$ and $f = U_\Lambda^{j,k} \mathbbm{1}_{\phi_{j,k}(\omega)}$, we get \begin{equation} \label{eq: density appears naturally}\|U_\Lambda^{j,k}\|_{L^2(\mathcal{R})} \simeq \|\Pi_\Lambda u\|_{L^2_g(\mathcal{R}_{j,k})}, \quad \|U_\Lambda^{j,k}\|_{L^2(F_{j,k})} \simeq \| \Pi_\Lambda u\|_{L^2_g(\mathcal{R}_{j,k} \cap \omega)}.\end{equation} Hence, \begin{equation}
    \label{eq: logunov malinnikova L2} \|\Pi_\Lambda u\|_{L^2_g(\mathcal{R}_{j,k})}^2 \leq C \|\Pi_\Lambda u\|_{L^2(\mathcal{R}_{j,k} \cap \omega)}^{2\alpha} \sup_{\Omega}|\nabla_{t,X,Y} V_\Lambda^{j,k}|^{2-2\alpha}.
\end{equation}

We now state the following Sobolev injection lemma, the proof of which is postponed to the next subsection.

\begin{lemma}
    \label{lemma: sobolev injection}
    There exist constants $C, K > 0$ independent of $u$ and $\Lambda$ such that for any $u \in L^2_g(\mathbb{H}^2)$, $$\sum_{j,k} \sup_\Omega|\nabla_{t,X,Y} V_\Lambda^{j,k}|^2 \leq C e^{K\Lambda} \|\Pi_\Lambda u\|_{L^2_g(\mathbb{H}^2)}^2.$$
\end{lemma}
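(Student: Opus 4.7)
The strategy is, for each $(j,k)$, to dominate $\sup_\Omega|\nabla_{t,X,Y}V_\Lambda^{j,k}|^2$ by an $L^2$ norm of $V_\Lambda^{j,k}$ on a slightly enlarged box $\Omega^+\supset\Omega$, transfer this back to the hyperbolic setting through $\phi_{j,k}$, sum in $(j,k)$ by bounded overlap, and close by spectral calculus applied to $v_\Lambda$.

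Since $V_\Lambda^{j,k}$ satisfies $\Div(A\nabla V_\Lambda^{j,k})=0$ with $A=\mathrm{diag}(Y^{-2},1,1)$, and $A$ is smooth and uniformly elliptic on any enlargement $\mathcal{R}_2^+$ of $\mathcal{R}_2$ keeping $Y$ bounded away from $0$, standard interior elliptic regularity together with the Sobolev embedding $H^s\hookrightarrow L^\infty$ (valid for $s>3/2$ in the three variables $(t,X,Y)$) will yield
\begin{equation*}
\sup_\Omega|\nabla_{t,X,Y}V_\Lambda^{j,k}|^2 \leq C\,\|V_\Lambda^{j,k}\|_{L^2(\Omega^+)}^2
\end{equation*}
on some $\Omega^+=(-T_2^+,T_2^+)\times\mathcal{R}_2^+$, with $C$ independent of $(j,k)$, since both the coefficients of $A$ and the reference boxes $\Omega\subset\Omega^+$ no longer depend on $(j,k)$ after the rescaling $\phi_{j,k}$. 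I would then change variables back to $(t,x,y)$: on $\tilde{\mathcal{R}}_{j,k}:=\phi_{j,k}^{-1}(\mathcal{R}_2^+)$ one has $y\simeq 2^j$, hence $dXdY=2^{-2j}dxdy\simeq y^{-2}dxdy=d\vol_g$, and therefore
\begin{equation*}
\|V_\Lambda^{j,k}\|_{L^2(\Omega^+)}^2 \simeq \int_{-T_2^+}^{T_2^+}\int_{\tilde{\mathcal{R}}_{j,k}}|v_\Lambda(t,z)|^2\,d\vol_g(z)\,dt.
\end{equation*}
The enlarged rectangles $\tilde{\mathcal{R}}_{j,k}$ retain the bounded-overlap property of the original covering, up to a larger multiplicity depending only on $R'$, by the same type of combinatorial count used in \eqref{eq: equivalence of norms}; summing in $(j,k)$ therefore produces
\begin{equation*}
\sum_{j,k}\sup_\Omega|\nabla_{t,X,Y}V_\Lambda^{j,k}|^2 \lesssim \int_{-T_2^+}^{T_2^+}\|v_\Lambda(t)\|_{L^2_g(\mathbb{H}^2)}^2\,dt.
\end{equation*}

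To close the estimate I would use the spectral representation of $v_\Lambda$. The map $\lambda\mapsto\sinh(\lambda t)/\lambda = t\,\sinh(\lambda t)/(\lambda t)$ is non-decreasing in $|\lambda|$ at fixed $t$, so $|\sinh(\lambda t)/\lambda|\leq \sinh(\Lambda|t|)/\Lambda \lesssim e^{\Lambda|t|}/\Lambda$ for every $\lambda\in[1/4,\Lambda]$. The functional calculus bound \eqref{eq: functional calculus bound} then gives
\begin{equation*}
\|v_\Lambda(t)\|_{L^2_g}^2 \leq \frac{\sinh^2(\Lambda|t|)}{\Lambda^2}\|\Pi_\Lambda u\|_{L^2_g}^2,
\end{equation*}
and integrating $t\in(-T_2^+,T_2^+)$ produces a factor $\lesssim e^{2\Lambda T_2^+}/\Lambda^2\leq Ce^{K\Lambda}$ with $K=2T_2^+$, finishing the proof.

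The main obstacle I anticipate is ensuring that the elliptic regularity constant and the bounded-overlap count are genuinely uniform in $(j,k)$: this works precisely because, once $\phi_{j,k}$ has been applied, the operator, the reference domains $\Omega\subset\Omega^+$, and the ellipticity and Lipschitz constants of $A$ are the same for every $(j,k)$. One secondary point to be careful about is choosing $\mathcal{R}_2^+$ properly contained in $\{Y>0\}$, so that the coefficient $Y^{-2}$ is smooth with uniform bounds on $\Omega^+$.
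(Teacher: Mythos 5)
Your proposal is correct, and it takes a genuinely different (and in fact more direct) route than the paper. The paper does not bound $\sup_\Omega|\nabla_{t,X,Y}V_\Lambda^{j,k}|$ by $\|V_\Lambda^{j,k}\|_{L^2(\Omega^+)}$ alone: it applies $H^2$ elliptic regularity to $\chi\nabla_{t,X,Y}V_\Lambda^{j,k}$ for the operator $1-\partial_t^2-Y^2\Delta_{X,Y}$, expands the resulting commutators into a long list of terms ($\partial_t^m v_\Lambda$, $y\nabla_{x,y}\partial_t^m v_\Lambda$, $\Delta_g v_\Lambda$, $y\nabla_{x,y}\Delta_g v_\Lambda$, \dots), controls the gradient terms by a separate integration-by-parts lemma (Lemma \ref{lemma: gradient}), and only then invokes the functional calculus on quantities of the form $\partial_t^p(-\Delta_g)^{q/2}v_\Lambda$ with $p+q\le 4$, which is what produces the polynomial-in-$\Lambda$ factors absorbed into $e^{K\Lambda}$. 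You instead exploit that $V_\Lambda^{j,k}$ solves the \emph{homogeneous} equation $\Div(A\nabla V_\Lambda^{j,k})=0$ with coefficients that are smooth and uniformly elliptic on the fixed enlarged box (since $Y$ stays away from $0$), so iterated interior regularity (Evans \S 6.3, higher interior regularity, two applications to reach $H^3$) plus the Sobolev embedding $H^2\hookrightarrow L^\infty$ in dimension $3$ gives $\sup_\Omega|\nabla_{t,X,Y}V_\Lambda^{j,k}|^2\lesssim\|V_\Lambda^{j,k}\|_{L^2(\Omega^+)}^2$ with constants independent of $(j,k)$, because after $\phi_{j,k}$ the operator and the nested reference boxes are the same for every $(j,k)$. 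This shortcut removes the need for the commutator bookkeeping and for Lemma \ref{lemma: gradient}, and it reduces the spectral-calculus step to the single multiplier $\sinh(\lambda t)/\lambda$, whose monotonicity in $\lambda$ and the bound \eqref{eq: functional calculus bound} (together with $\Lambda\ge\tfrac14$, so $\Lambda^{-2}$ is harmless) give the $Ce^{K\Lambda}$ factor. The points you flag as potential obstacles are indeed the only ones that need care, and you handle them correctly: uniformity of the elliptic and Sobolev constants in $(j,k)$, keeping $\mathcal{R}_2^+$ away from $\{Y=0\}$, and the bounded overlap of the enlarged rectangles $\phi_{j,k}^{-1}(\mathcal{R}_2^+)$, which follows from the same counting argument as in \eqref{eq: equivalence of norms} with a multiplicity depending only on the enlargement. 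The only implicit point worth stating explicitly in a write-up is that $v_\Lambda$ (hence $V_\Lambda^{j,k}$) is smooth, so the interior estimates apply classically; this follows from the spectral localization of $\Pi_\Lambda u$ and local elliptic regularity, and is also used tacitly by the paper.
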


By Young's inequality, $$a^{2-2\alpha} b^{2\alpha} \leq C(a^2 + b^2).$$ Thus, by \eqref{eq: logunov malinnikova L2}, for any $D > 0$, \begin{equation}
    \|\Pi_\Lambda u\|_{L^2_g(\mathcal{R}_{j,k})}^2 \leq C e^{-D\Lambda} \sup_\Omega |\nabla_{t,X,Y} V_\Lambda^{j,k}|^2 + C e^{\frac{2-2\alpha}{2\alpha} D\Lambda} \|\Pi_\Lambda u\|_{L^2_g(\omega \cap \mathcal{R}_{j,k})}^2.
\end{equation} Summing over $(j,k)$ and applying Lemma \ref{lemma: sobolev injection} then gives \begin{multline}
    \|\Pi_\Lambda u\|_{L^2_g(\mathbb{H}^2)}^2 \leq C e^{-D\Lambda} \sum_{j,k} \sup_{\Omega} |\nabla_{t,X,Y}V_\Lambda^{j,k}|^2 + C e^{\frac{2-2\alpha}{2\alpha}D\Lambda} \|\Pi_\Lambda u\|_{L^2_g(\omega \cap \mathcal{R}_{j,k})}^2 \\ \leq C e^{-D \Lambda} e^{K\Lambda} \|\Pi_\Lambda u\|_{L^2_g(\mathbb{H}^2)}^2 + C e^{\frac{2-2\alpha}{2\alpha} D\Lambda} \|\Pi_\Lambda u\|_{L^2_g(\omega)}^2.\end{multline} Fixing $D > K$ gives the result for $\Lambda$ greater than some $\Lambda_0$ satisfying $Ce^{(K-D)\Lambda_0} < 1$. For $\Lambda < \Lambda_0$, the result is proved by applying Theorem \ref{theo: main result} to $\Pi_\Lambda u$.

\subsection{Proof of Lemma \ref{lemma: sobolev injection}}
\label{sec: proof Sobolev injection}

We now prove Lemma \ref{lemma: sobolev injection}. Recall that $\Omega$ denotes the set $(-T_2, T_2)_t~\times~(-2, 2)_X~\times~\left(\frac{1}{4}, \frac{7}{4}\right)_y$ and $V_\Lambda^{j,k} = v_\Lambda \circ \phi_{j,k}$. By the Sobolev injection in dimension 3, we have $$\sup_\Omega | \nabla_{t,X,Y} V_\Lambda^{j,k}|^2 \leq \|\chi \nabla_{t,X,Y} V_\Lambda^{j,k} \|_{H^2(\Omega^{(1)})}^2$$ with $\chi$ a smooth cutoff function supported in $$\Omega^{(1)} = (-T_3,T_3) \times (-2-\eta,2+\eta) \times \left(\frac{1}{4}-\eta,\frac{7}{4}+\eta\right)$$ and equal to 1 over $\Omega$. $\eta$ denotes a positive, small enough constant and $T_3 > T_2$. We can then apply elliptic regularity (see \cite[Section 6.3]{evans2022book}) to the operator $-(\partial_t^2 + Y^2 \Delta_{X,Y})$, which is uniformly elliptic over $\Omega^{(1)}$, to obtain \begin{multline}
\label{eq: elliptic regularity}
\|\chi\nabla_{t,X,Y} V_\Lambda^{j,k} \|_{H^2(\Omega^{(1)})}^2 \lesssim \|(1 - \partial_t^2 - Y^2 \Delta_{X,Y}) \chi\nabla_{t,X,Y}V_\Lambda^{j,k} \|_{L^2(\Omega^{(1)})}^2. \\
\lesssim \|(1 - \partial_t^2 - Y^2 \Delta_{X,Y}) \chi \partial_t V_\Lambda^{j,k} \|_{L^2(\Omega^{(1)})}^2 + \|(1 - \partial_t^2 - Y^2 \Delta_{X,Y}) \chi \nabla_{X,Y}V_\Lambda^{j,k} \|_{L^2(\Omega^{(1)})}^2.
\end{multline} The first term is bounded up to some multiplicative constant by \begin{multline}
\label{eq: time derivative term bound 1}
\|\partial_t V_{\Lambda}^{j,k}\|_{L^2(\Omega^{(1)})}^2 + \|\partial_t^2 V_{\Lambda}^{j,k}\|_{L^2(\Omega^{(1)})}^2 + \|\partial_t^3 V_{\Lambda}^{j,k}\|_{L^2(\Omega^{(1)})}^2 \\+ \|\nabla_{X,Y} \partial_t V_\Lambda^{j,k}\|_{L^2(\Omega^{(1)})}^2 + \|(Y^2 \Delta_{X,Y}) \partial_tV_\Lambda^{j,k}\|_{L^2(\Omega^{(1)})}^2\end{multline} and the second term by \begin{multline}
    \label{eq: gradient term bound 1}
    \|\nabla_{X,Y}V_\Lambda^{j,k} \|_{L^2(\Omega^{(1)})}^2 + \|\nabla_{X,Y} \partial_t V_\Lambda^{j,k} \|_{L^2(\Omega^{(1)})}^2 + \|\nabla_{X,Y} \partial_t^2 V_\Lambda^{j,k} \|_{L^2(\Omega^{(1)})}^2 + \|Y^2 \nabla_{X,Y}V_\Lambda^{j,k} \|_{L^2(\Omega^{(1)})}^2 \\+ \|Y^2\nabla_{X,Y}^2 V_\Lambda^{j,k} \|_{L^2(\Omega^{(1)})}^2 + \|Y \Delta_{X,Y}V_\Lambda^{j,k} \|_{L^2(\Omega^{(1)})}^2 + \|\nabla_{X,Y} (Y^2 \Delta_{X,Y} V_\Lambda^{j,k}) \|_{L^2(\Omega^{(1)})}^2
\end{multline} where for the last four terms we used the identity $$Y^2\Delta(\chi \nabla f) = Y^2 (\Delta \chi) \nabla f + Y^2 \nabla^2 f (\nabla \chi) - \chi \Delta f \begin{pmatrix} 0 \\ 2Y \end{pmatrix}+ \chi \nabla(Y^2 \Delta f)$$ and $\nabla^2 f$ denotes the Hessian matrix of a function $f$. We again swell $\Omega^{(1)}$ by $\eta$ in the $(X,Y)$ variables to bound the Hessian of $V_\Lambda^{j,k}$ by its Laplacian and lower order terms: $$\|\nabla_{X,Y}^2 V_\Lambda^{j,k}\|_{L^2(\Omega^{(1)})}^2 \lesssim \|V_\Lambda^{j,k}\|_{L^2(\Omega^{(2)})}^2 + \|\nabla_{X,Y} V_\Lambda^{j,k}\|_{L^2(\Omega^{(2)})}^2 + \|\Delta_{X,Y} V_\Lambda^{j,k}\|_{L^2(\Omega^{(2)})}^2$$ with $\Omega^{(2)} = (-T_3,T_3) \times (-2-2\eta,2+2\eta) \times \left(\frac{1}{4}-2\eta,\frac{7}{4}+2\eta\right)$ and an implicit multiplicative constant independent of $(j,k, \Lambda)$.

For $(j,k) \in \mathbb{Z}^2$, we denote $\Omega^{(2)}_{j,k} = \phi_{j,k}^{-1}(\Omega^{(2)})$. Switching back to coordinates $(x,y)$ and using \eqref{eq: density appears naturally}, we deduce the following estimates: $$\begin{aligned}\|Y^l \partial_t^m V_\Lambda^{j,k}\|_{L^2(\Omega^{(2)})}^2 &= \|\partial_t^m v_\Lambda\|_{L^2_g\left(\Omega^{(2)}_{j,k}\right)}^2, & (l, m) \in \{0, 1, 2\}^2, \\ 
\|Y^l \nabla_{X,Y} \partial_t^m V_\Lambda^{j,k}\|_{L^2(\Omega^{(2)})}^2 & \simeq \|y \nabla_{x,y} \partial_t^m v_\Lambda\|_{L^2_g\left(\Omega^{(2)}_{j,k}\right)}^2, & (l,m) \in \{0, 1, 2\}^2, \\
\|Y^l \Delta_{X,Y} V_\Lambda^{j,k}\|_{L^2(\Omega^{(2)})}^2 &\simeq \|y^2 \Delta_{x,y} v_\Lambda\|_{L^2_g\left(\Omega^{(2)}_{j,k}\right)}^2, & l \in \{1,2\}, \\
\|\nabla_{X,Y} (Y^2 \Delta_{X,Y} V_\Lambda^{j,k}) \|_{L^2(\Omega^{(2)})}^2 &\simeq \|y \nabla_{x,y} (y^2 \Delta_{x,y} v_\Lambda) \|_{L^2_g\left(\Omega^{(2)}_{j,k}\right)}^2.
\end{aligned}$$

Combining \eqref{eq: elliptic regularity}, \eqref{eq: time derivative term bound 1} and \eqref{eq: gradient term bound 1} and summing over $(j,k) \in \mathbb{Z}^2$ then yields \begin{multline}
    \label{eq: summed inequality} \sum_{j,k} \sup_\Omega|\nabla_{t,X,Y} V_\Lambda^{j,k}|^2 \lesssim \left(\sum_{m=1}^3 \|\partial_t^m v_\Lambda\|_{L^2_g\left((-T_2,T_2) \times \mathbb{H}^2\right)}^2\right) + \left(\sum_{m=0}^2 \|y \nabla_{x,y} \partial_t^m v_\Lambda\|_{L^2_g\left((-T_2,T_2) \times \mathbb{H}^2\right)}^2 \right) \\+ \|\Delta_g v_\Lambda\|_{L^2_g\left((-T_2,T_2) \times \mathbb{H}^2\right)}^2 + \|\Delta_g \partial_t v_\Lambda\|_{L^2_g\left((-T_2,T_2) \times \mathbb{H}^2\right)}^2 + \|y\nabla_{x,y} \Delta_g v_\Lambda \|_{L^2_g\left((-T_2,T_2) \times \mathbb{H}^2\right)}^2.
\end{multline}

Recall that \begin{equation}\label{eq: spawning powers of lambda}\partial_t^p \left( (-\Delta_g)^{q/2} v_\Lambda \right) = \int_0^\Lambda \lambda^{p+q-1} \sinh^{(p)}(\lambda t) dm_\lambda u, \quad p, q \in \mathbb{N}\end{equation} so by \eqref{eq: functional calculus bound} the first, third and fourth terms in \eqref{eq: summed inequality} can be bounded by $$Ce^{C\Lambda} \| \Pi_\Lambda u \|_{L^2_g\left(\mathbb{H}^2\right)}.$$ There remains to bound the terms involving a gradient, using integration by parts in the $(X,Y)$ variables:

\begin{lemma}\label{lemma: gradient}
Call $h$ any of the functions $v_\Lambda, \partial_t v_\Lambda, \partial_t^2 v_\Lambda, \Delta_g v_\Lambda$, then $$\|y\nabla_{x,y} h \|_{L^2_g((-T_3, T_3) \times \mathbb{H}^2)}^2 \lesssim \|h\|_{L^2_g((-T_3, T_3) \times \mathbb{H}^2)}^2 + \|\Delta_g h\|_{L^2_g((-T_3,T_3) \times \mathbb{H}^2)}^2.$$\end{lemma}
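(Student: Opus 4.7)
\smallskip

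\noindent\textbf{Proof plan.} By Fubini it suffices to prove, for each fixed $t \in (-T_3, T_3)$, the pointwise-in-$t$ estimate
\begin{equation*}
\|y\nabla_{x,y} h(t,\cdot)\|_{L^2_g(\mathbb{H}^2)}^2 \,\lesssim\, \|h(t,\cdot)\|_{L^2_g(\mathbb{H}^2)}^2 + \|\Delta_g h(t,\cdot)\|_{L^2_g(\mathbb{H}^2)}^2,
\end{equation*}
with an implicit constant independent of $t$, and then integrate over $t$. The key observation is that, intrinsically, the left-hand side is nothing but the squared $L^2_g$-norm of the Riemannian gradient: since $g^{ij} = y^2 \delta^{ij}$, one has $|\nabla_g h|_g^2 = y^2 |\nabla_{x,y} h|^2 = |y\nabla_{x,y} h|^2$, and therefore
\begin{equation*}
\|y\nabla_{x,y} h\|_{L^2_g(\mathbb{H}^2)}^2 = \int_{\mathbb{H}^2} |\nabla_g h|_g^2 \, d\vol_g.
\end{equation*}

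Next, I would integrate by parts using the fact that $(\mathbb{H}^2, g)$ is a \emph{complete} Riemannian manifold \emph{without boundary}. By density of $C^\infty_c(\mathbb{H}^2)$ in $H^2_g(\mathbb{H}^2)$ on complete manifolds (see \cite{hebey1996sobolev}), one has for any $\psi \in H^2_g(\mathbb{H}^2)$
\begin{equation*}
\int_{\mathbb{H}^2} |\nabla_g \psi|_g^2 \, d\vol_g = \langle \psi, -\Delta_g \psi \rangle_{L^2_g},
\end{equation*}
with no boundary contribution whatsoever: the Euclidean boundary $\{y=0\}$ sits at infinite geodesic distance and is not a boundary of the Riemannian manifold. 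Applying this identity to $\psi = h(t,\cdot)$, then Cauchy--Schwarz followed by Young's inequality, yields
\begin{equation*}
\int_{\mathbb{H}^2} |\nabla_g h|_g^2 \, d\vol_g \,\leq\, \|h\|_{L^2_g} \|\Delta_g h\|_{L^2_g} \,\leq\, \tfrac{1}{2}\|h\|_{L^2_g}^2 + \tfrac{1}{2}\|\Delta_g h\|_{L^2_g}^2,
\end{equation*}
which is the sought pointwise-in-$t$ estimate; an integration over $t \in (-T_3, T_3)$ concludes.

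The only step requiring genuine justification is the membership $h(t,\cdot) \in H^2_g(\mathbb{H}^2)$ uniformly in $t \in (-T_3, T_3)$, which is exactly what legitimises the integration by parts. This is where I would invoke the spectral truncation: each of the four candidates $v_\Lambda$, $\partial_t v_\Lambda$, $\partial_t^2 v_\Lambda$, $\Delta_g v_\Lambda$ is obtained from $u$ via the functional calculus \eqref{eq: spawning powers of lambda} with a multiplier of the form $\lambda^{p-1}\sinh^{(p)}(\lambda t)$ (possibly multiplied by $\lambda^2$ in the last case) that is bounded on the compact spectral window $[\tfrac14, \Lambda]$ by some $Ce^{C\Lambda}$. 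Hence $h(t,\cdot)$, $\nabla_g h(t,\cdot)$ and $\Delta_g h(t,\cdot)$ all lie in $L^2_g$ for every $t$, so $h(t,\cdot) \in H^2_g$ as required. I expect this verification to be the only delicate point: the rest of the argument is a soft Cauchy--Schwarz/Young manipulation exploiting the self-adjointness of $-\Delta_g$ on the complete manifold $\mathbb{H}^2$.
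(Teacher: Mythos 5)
Your proof is correct, but it takes a genuinely different route from the paper. The paper argues locally: it writes the left-hand side as a sum over the rectangles $\mathcal{R}_{j,k}$, passes to the rescaled coordinates $(X,Y)$, introduces a cutoff $\Tilde{\chi}$ on a slightly enlarged box $\Omega^{(3)}$, integrates by parts there to get $\|\nabla_{X,Y}(\Tilde{\chi}H_{j,k})\|^2 \leq \|\Tilde{\chi}H_{j,k}\|\,\|\Delta_{X,Y}(\Tilde{\chi}H_{j,k})\|$, and then sums over $(j,k)$ and absorbs the gradient term by Young's inequality. You instead observe the intrinsic identity $|y\nabla_{x,y}h|^2 = |\nabla_g h|_g^2$ and perform a single global integration by parts on the complete boundaryless manifold $\mathbb{H}^2$, reducing the lemma at each fixed $t$ to $\||\nabla_g h|_g\|_{L^2_g}^2 = \langle h, -\Delta_g h\rangle_{L^2_g} \leq \|h\|_{L^2_g}\|\Delta_g h\|_{L^2_g}$, the justification being that $h(t,\cdot)$ is spectrally localized and hence lies in the domain of every power of $-\Delta_g$. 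This is cleaner, avoids cutoffs and the absorption step entirely, and stays consistent with the paper's framework, since the right-hand side of the lemma is a global $L^2_g$-norm anyway; the paper's local route has the merit of using only flat elliptic estimates on uniformly sized boxes, in the same spirit as the rest of Subsection \ref{sec: proof Sobolev injection}, and of not invoking completeness. One small refinement: rather than citing density of $C^\infty_c$ in $H^2_g$ (which on a general complete manifold needs bounded geometry, though $\mathbb{H}^2$ certainly has it), it is slightly tidier to justify Green's identity via essential self-adjointness of $-\Delta_g$ on $C^\infty_c(\mathbb{H}^2)$ (completeness) together with the fact that the operator domain is contained in the form domain, on which the quadratic form equals the Dirichlet integral; the identity for $\psi = h(t,\cdot) \in D(-\Delta_g)$ then follows directly, and continuity in $t$ of all the norms makes the final integration over $(-T_3,T_3)$ immediate.
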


\begin{proof}[Proof of Lemma \ref{lemma: gradient}]
    Recall that $$\|y\nabla_{x,y} h \|_{L^2_g((-T_3, T_3) \times \mathbb{H}^2)}^2 \leq \sum_{j,k} \|\nabla_{X,Y} (h \circ \phi_{j,k}^{-1})\|_{L^2(\Omega^{(2)})}^2.$$ Let $\Tilde{\chi}(X,Y)$ be a cutoff function equal to 1 over $(-2-2\eta,2+2\eta)_X \times (\frac{1}{4}-2\eta, \frac{7}{4}+ 2\eta)_Y$, supported in $(-2-3\eta,2+3\eta)_X \times (\frac{1}{4}-3\eta, \frac{7}{4}+3\eta)_Y$. Then by integration by parts and the Cauchy--Schwarz inequality, denoting $\Omega^{(3)}=(-T_3,T_3) \times (-2-3\eta,2+3\eta)_X \times (\frac{1}{4}-3\eta, \frac{7}{4}+3\eta)_Y$ and $H_{j,k} = h \circ \phi_{j,k}^{-1}$, we get 
    $$\begin{aligned}
        \|\nabla_{X,Y} H_{j,k}\|_{L^2(\Omega^{(2)})}^2 &\leq \|\nabla_{X,Y} (\Tilde{\chi} H_{j,k})\|_{L^2(\Omega^{(3)})}^2 \\
        &\leq \|\Tilde{\chi} H_{j,k}\|_{L^2(\Omega^{(3)})} \|\Delta_{X,Y} (\Tilde{\chi} H_{j,k})\|_{L^2(\Omega^{(3)})} \\
        &\leq C \|H_{j,k}\|_{L^2(\Omega^{(3)})} \\
        &\times \left(\|H_{j,k}\|_{L^2(\Omega^{(3)})} + \|\nabla_{X,Y} H_{j,k}\|_{L^2(\Omega^{(3)})} + \|\Delta_{X,Y} H_{j,k}\|_{L^2(\Omega^{(3)})}\right)
    \end{aligned}$$

    Changing variables and summing over $(j,k)$ gives \begin{multline}\|y\nabla_{x,y} h \|_{L^2_g((-T_3, T_3) \times \mathbb{H}^2)}^2 \leq C \left( \|h\|_{L^2_g((-T_3, T_3) \times \mathbb{H}^2)}^2 \right.\\ \left.+ \|y\nabla_{x,y}h\|_{L^2_g((-T_3, T_3) \times \mathbb{H}^2)} \|h\|_{L^2_g((-T_3, T_3) \times \mathbb{H}^2)} + \|\Delta_g h\|_{L^2_g((-T_3, T_3) \times \mathbb{H}^2)}^2\right).\end{multline}
    The result then comes from the inequality $ab \leq \frac{1}{2} (\varepsilon a^2 + \frac{1}{\varepsilon} b^2)$ with $\varepsilon = \frac{1}{2C}$, applied to $a = \|y\nabla_{x,y}h\|_{L^2_g((-T_3, T_3) \times \mathbb{H}^2)}$ and $b = \|h\|_{L^2_g((-T_3, T_3) \times \mathbb{H}^2)}$.
\end{proof}

Combining this lemma with \eqref{eq: summed inequality}, we can replace all terms in the right-hand-side of \eqref{eq: summed inequality} by squared $L^2_g$-norms of terms of the form \eqref{eq: spawning powers of lambda} with $0 \leq p+q \leq 4$. We then conclude the proof of Lemma \ref{lemma: sobolev injection} using estimate \eqref{eq: functional calculus bound} and the inequality $$\sup_{0 \leq m \leq 4, 0\leq \lambda \leq \Lambda, |t| \leq T_3} \lambda^m \left(\cosh(\lambda t) + |\sinh(\lambda t)|\right) \leq Ce^{K\Lambda}$$ for some constants $C, K > 0$ independent of $\Lambda$ but allowed to depend on $T_3$. This completes the proof that Assumption \ref{ass: geodesic balls} implies the spectral estimate \eqref{eq: main result estimate}.

\subsection{Necessary condition}
\label{sec: necessary condition}

We now prove that Assumption \ref{ass: geodesic balls} is necessary for the spectral estimate to hold, using the heat kernel approach developed in \cite{wangwangzhangzhang2019spectralineq_heat} for the flat Laplacian. The first step is the following observability estimate:

\begin{lemma}[Spectral estimate implies observability]
    \label{lemma: observability}
    Consider $\omega \subset \mathbb{H}^2$ a measurable set such that the spectral estimate \eqref{eq: main result estimate} holds for $\omega$. For any solution $u$ of the heat equation on $\mathbb{H}^2$ \begin{equation}
        \label{eq: heat equation}
        \partial_t u - \Delta_g u = 0 \text{ in } (0, +\infty)_t \times \mathbb{H}^2_z, \quad u(0,.) = u_0 \in L^2_g(\mathbb{H}^2),
    \end{equation} for any $T > 0$, there exists a positive constant $C_{obs}(T, \omega)$ such that \begin{equation}
        \label{eq: observability}
        \int_{\mathbb{H}^2} |u(T,z)|^2 d\vol_g(z) \leq C_{obs} \int_0^T \int_\omega |u(s,z)|^2 d\vol_g(z)ds.
    \end{equation}
\end{lemma}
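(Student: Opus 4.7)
The plan is to use the telescoping series method of Phung and Wang, which is the standard route from a spectral projector estimate of Jerison--Lebeau type to a final-time parabolic observability inequality. The argument is purely semigroup-theoretic: it uses only the self-adjointness of $-\Delta_g$ on $L^2_g(\mathbb{H}^2)$ together with the spectral estimate \eqref{eq: main result estimate}, so it is insensitive to the hyperbolic geometry and should transfer essentially verbatim from \cite{wangwangzhangzhang2019spectralineq_heat}.

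First I would set up the two basic ingredients. Let $u(s) = e^{s\Delta_g}u_0$ solve \eqref{eq: heat equation}. Since $\Pi_\Lambda$ commutes with $e^{s\Delta_g}$, functional calculus applied to the spectral measure $dm_\lambda$ yields the high-frequency dissipation bound
$$\|(I-\Pi_\Lambda)u(s)\|_{L^2_g(\mathbb{H}^2)}^2 \leq e^{-2\Lambda^2(s-s')}\|u(s')\|_{L^2_g(\mathbb{H}^2)}^2, \qquad 0 \leq s' \leq s,$$
together with the monotonicity $\|u(s)\|_{L^2_g} \leq \|u(s')\|_{L^2_g}$. Combining this with the spectral estimate applied to $\Pi_\Lambda u(s)$ gives, for any admissible $\Lambda$,
$$\|\Pi_\Lambda u(s)\|_{L^2_g}^2 \leq 2C^2 e^{2C\Lambda}\Bigl(\|u(s)\|_{L^2_g(\omega)}^2 + e^{-2\Lambda^2(s-s')}\|u(s')\|_{L^2_g}^2\Bigr).$$

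Next I would run the telescoping argument. Fix $T > 0$ and introduce a dyadic time sequence $\tau_k = T(1 - 2^{-k})$ together with midpoints $\sigma_k = (\tau_k + \tau_{k+1})/2$ and frequency cutoffs $\Lambda_k$ to be tuned. Writing $\|u(\tau_{k+1})\|_{L^2_g}^2 \leq 2\|\Pi_{\Lambda_k} u(\tau_{k+1})\|_{L^2_g}^2 + 2\|(I-\Pi_{\Lambda_k})u(\tau_{k+1})\|_{L^2_g}^2$ and integrating the combined bound over $s \in [\sigma_k, \tau_{k+1}]$ (with $s' = \tau_k$), one derives a recurrence of the form
$$\|u(\tau_{k+1})\|_{L^2_g}^2 \leq A_k \int_{\tau_k}^{\tau_{k+1}} \|u(s)\|_{L^2_g(\omega)}^2\, ds + B_k \|u(\tau_k)\|_{L^2_g}^2,$$
with $A_k \lesssim e^{2C\Lambda_k}(\tau_{k+1}-\sigma_k)^{-1}$ and $B_k \lesssim e^{2C\Lambda_k} e^{-2\Lambda_k^2 (\tau_{k+1}-\sigma_k)}$. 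Choosing $\Lambda_k = M 2^k$ with $M$ large enough compared to $C/T$ forces $B_k \leq 2^{-(k+1)}$, while $A_k$ grows only single-exponentially in $2^k$. Iterating the recursion and using the continuity $\|u(\tau_k)\|_{L^2_g} \to \|u(T)\|_{L^2_g}$ then produces
$$\|u(T)\|_{L^2_g}^2 \leq C_{obs} \int_0^T \|u(s)\|_{L^2_g(\omega)}^2\, ds.$$

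The main (and essentially only) obstacle is the classical balancing step: the subexponential growth $e^{C\Lambda_k}$ from the spectral estimate must be defeated by the quadratic-exponential decay $e^{-\Lambda_k^2 \delta_k}$ coming from parabolic dissipation on time slices of length $\delta_k = \tau_{k+1}-\sigma_k \simeq T 2^{-k}$. The fact that the cost exponent in \eqref{eq: main result estimate} is linear in $\Lambda$ (rather than in $\Lambda^2$) is exactly what makes this competition succeed, and is why the Lebeau--Robbiano scheme applies; the telescoped constants $A_k$ can then be absorbed into a geometric series with ratio $1/2$ coming from the $B_k$. No feature of $\mathbb{H}^2$ enters beyond self-adjointness of $-\Delta_g$, so this is the standard argument reproduced in Appendix \ref{sec: Appendix observability}.
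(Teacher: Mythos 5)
Your two ingredients (the spectral estimate commuted with the semigroup, and the high-frequency dissipation bound) are the right ones, and the recurrence
$\|u(\tau_{k+1})\|_{L^2_g}^2 \leq A_k \int_{\tau_k}^{\tau_{k+1}} \|u(s)\|_{L^2_g(\omega)}^2\, ds + B_k \|u(\tau_k)\|_{L^2_g}^2$
with $A_k \simeq e^{2C\Lambda_k}\delta_k^{-1}$ and $B_k \simeq e^{2C\Lambda_k - 2\Lambda_k^2\delta_k}$ is correct. The gap is in the final step: iterating this recurrence does \emph{not} produce \eqref{eq: observability}. Unfolding it from $k=K$ down to $k=0$ gives
$\|u(\tau_{K+1})\|^2 \leq \sum_{k=0}^{K} A_k \bigl(\prod_{j=k+1}^{K} B_j\bigr)\int_{\tau_k}^{\tau_{k+1}}\|u(s)\|_{L^2_g(\omega)}^2 ds + \bigl(\prod_{j=0}^{K}B_j\bigr)\|u_0\|^2$,
and while the last product kills the $\|u_0\|^2$ term as $K\to\infty$, the coefficient of the $k$-th observation window is $A_k\prod_{j=k+1}^K B_j$, which for $k$ near $K$ (and in particular for $k=K$, where the product is empty) is of size $e^{2CM2^K}$ and blows up. With your choices $\Lambda_k = M2^k$, $\delta_k \simeq T2^{-k}$, the exponent of $A_k\prod_{j>k}B_j$ is $\simeq (2CM+2c)2^k - c\,2^{K+1}$, so only the windows with $K-k$ larger than a fixed threshold are attenuated; the cost of the last few windows, nearest to $t=T$, is never paid, because no dissipation interval follows them. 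You cannot stop at finite $K$ either, since the leftover $\prod_{j\le K}B_j\,\|u_0\|^2$ involves $\|u_0\|$, which is not controlled by $\|u(T)\|$ or by the observation. So the sentence ``the telescoped constants $A_k$ can then be absorbed into a geometric series with ratio $1/2$'' is exactly where the argument fails.

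This is the known reason why one does not run the naive forward iteration but instead uses either Miller's weighted argument or the Phung--Wang telescoping series, which is what the paper does in Appendix~\ref{sec: Appendix observability}: first the competition $e^{K\Lambda}$ versus $e^{-T\Lambda^2}$ is optimized once and for all to yield the interpolation inequality \eqref{eq: Hölder}, then its $\varepsilon$-form \eqref{eq: translated in time} is applied on a geometric sequence of times $l_m = \lambda^{m-1}T$ decreasing to $0$, with $\varepsilon = \varepsilon_m$ chosen so that the weighted quantities $e^{-\mu C'/(l_m-l_{m+2})}\|u(l_m)\|^2$ telescope when summed over odd $m$. The crucial structural difference from your scheme is that the interval-dependent cost $e^{2\tilde C/(l_{m+1}-l_{m+2})}$ is moved onto the \emph{left-hand side} (into the weights, via the choice of $\varepsilon_m$) where it telescopes, so every observation term on the right carries the same uniform constant $e^{2\tilde C}$; in your scheme that cost sits in front of the observation integral and nothing compensates it for the final blocks. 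To repair your proof you should either reproduce this telescoping step (or Miller's continuous-weight version), or prove null-controllability by the genuine Lebeau--Robbiano construction, where dissipation acts \emph{after} each control interval, and then dualize; the recursion alone is not enough.
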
 We provide a proof and references concerning this classical result in Appendix \ref{sec: Appendix observability} for the sake of completeness.

We now show that the observability estimate \eqref{eq: observability} implies Assumption \ref{ass: geodesic balls}. Consider the heat kernel on the hyperbolic half-plane: \begin{equation}
    \label{eq: heat kernel expression}
    H(t, z, z') = \frac{\sqrt{2}}{(4\pi t)^\frac{3}{2}} e^{-\frac{t}{4}} \int_{d_g(z,z')}^\infty \frac{s e^{-\frac{s^2}{4t}}ds}{\left(\cosh(s) - \cosh(d_g(z,z'))\right)^{1/2}}, \quad \forall z, z' \in \mathbb{H}^2, t > 0.
\end{equation}

Any solution $u$ of the heat equation \eqref{eq: heat equation} then satisfies \begin{equation}
    \label{eq: general heat solution via heat kernel}
    u(t,z) = \int_{\mathbb{H}^2} H(t,z,z') u_0(z') d\vol_g(z'),
\end{equation} and by the semi-group property for $e^{t\Delta_g}$, one has \begin{equation} \label{eq: convolution heat kernels} H(t, z_1, z_2) = \int_{\mathbb{H}^2} H(s, z_1, z) H(t-s, z, z_2) d\vol_g(z), \quad \forall z_1, z_2 \in \mathbb{H}^2, 0 < s < t.\end{equation} We refer to \cite{mckean1970spectrumnegativecurvature,grigor'yannoguchi1998heatkerhyperbolic} for a proof of the expression of the heat kernel on the hyperbolic plane and to \cite[Chapters 4, 7]{chengliyau2019estimate_heatkernel_mfd,grigor'yan2009book} and the references therein for a more detailed study of the heat kernel on manifolds.

We now fix some $z_0 \in \mathbb{H}^2$ and show that there exist some $R, \delta > 0$ independent of $z_0$ such that $\vol_g\left( B^g_R(z_0) \cap \omega \right) \geq \delta$. Let us apply Lemma \ref{lemma: observability} with the initial condition $$u_0(z) = H(1, z, z_0).$$ By \eqref{eq: convolution heat kernels}, the corresponding solution $u(t, z)$ of the heat equation is \begin{equation}
    \label{eq: heat solution for observability}
    u(t,z) = H(t+1, z, z_0), \quad t > 0, z \in \mathbb{H}^2.
\end{equation} Inserting \eqref{eq: heat solution for observability} in the observability inequality \eqref{eq: observability} for time $T = 1$ yields \begin{equation}
    \label{eq: observability for specific solution}
    \int_{\mathbb{H}^2} |H(2, z, z_0)|^2 d\vol_g(z) \leq C_{obs}(1) \int_0^1 \int_\omega |H(s+1, z, z_0)|^2 d\vol_g(z) ds.
\end{equation}

The next lemma allows to conclude: \begin{lemma}[Gaussian bounds]
    \label{lemma: gaussian bounds heat kernel}
    The heat kernel $H$ given by \eqref{eq: heat kernel expression} admits Gaussian (as functions of $d_g(z,z')$) lower and upper bounds. More precisely, there exist some constants $\alpha, \beta > 0$ such that the lower bound \begin{equation}
        \label{eq: gaussian lower bound}
        H(2, z, z_0) \gtrsim e^{-\beta d_g(z,z_0)^2}
    \end{equation} and the upper bound \begin{equation}
        \label{eq: gaussian upper bound} H(t, z, z_0) \lesssim \frac{\sqrt{\gamma t}}{f(\gamma t)} e^{-\frac{\alpha}{t} d_g(z, z_0)^2}, \quad t > 0,
    \end{equation} hold for some $\gamma > 0$ and $f(t) = e^\frac{t}{4} t^\frac{3}{2}$.
\end{lemma}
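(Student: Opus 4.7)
The plan is to read both inequalities directly off the explicit formula
\[
H(t,z,z_0) = \frac{\sqrt{2}}{(4\pi t)^{3/2}} e^{-t/4} \int_{r}^{\infty} \frac{s\,e^{-s^2/(4t)}}{\sqrt{\cosh s - \cosh r}}\,ds, \qquad r := d_g(z,z_0),
\]
combined with the factorisation $\cosh s - \cosh r = 2\sinh\tfrac{s+r}{2}\sinh\tfrac{s-r}{2}$ and the two elementary estimates $a \leq \sinh a$ and $\sinh a \leq \tfrac{1}{2}e^{a}$. The upper bound uses the lower envelope on $\sinh$; the lower bound uses the upper envelope.

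For the upper bound \eqref{eq: gaussian upper bound}, the inequality $\sinh a \geq a$ gives $\cosh s - \cosh r \geq (s^2-r^2)/2$, so the inner integral is at most $\sqrt{2}\int_r^\infty s\, e^{-s^2/(4t)} (s^2-r^2)^{-1/2}\,ds$. The change of variables $v = s^2 - r^2$ turns this into $\tfrac{\sqrt{2}}{2} e^{-r^2/(4t)}\int_0^\infty v^{-1/2} e^{-v/(4t)}\,dv = \sqrt{2\pi t}\,e^{-r^2/(4t)}$. After collecting the prefactor this yields $H(t,z,z_0) \leq (4\pi t)^{-1}\,e^{-t/4}\,e^{-r^2/(4t)}$, which matches \eqref{eq: gaussian upper bound} with $\gamma = 1$ and $\alpha = 1/4$ once one observes that $\sqrt{\gamma t}/f(\gamma t) = (\gamma t)^{-1} e^{-\gamma t/4}$.

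For the lower bound \eqref{eq: gaussian lower bound} at $t = 2$, I would split on $r$. Since $\mathbb{H}^2$ is homogeneous under its isometry group, $H(2,z,z_0)$ is a radial, continuous, strictly positive function of $r$, hence bounded below on the compact set $\{r \leq 1\}$ by some $c_0 > 0$, which majorises $c_0\,e^{-\beta r^2}$ there. For $r \geq 1$ I would restrict integration to $s \in [r+1, r+2]$; on this interval $\sinh\tfrac{s-r}{2} \leq \sinh 1$ and $\sinh\tfrac{s+r}{2} \leq \tfrac{1}{2} e^{(r+2)/2}$, so $\sqrt{\cosh s - \cosh r} \lesssim e^{(r+2)/4}$, while $s\,e^{-s^2/8} \gtrsim r\,e^{-(r+2)^2/8}$. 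Integrating over this unit-length interval and inserting the prefactor gives $H(2,z,z_0) \gtrsim r\,e^{-(r+2)^2/8 - (r+2)/4}$; for $r \geq 1$ the factor $r$ and the linear terms in the exponent are absorbed into the quadratic part $-r^2/8$ at the price of enlarging $\beta$, which yields \eqref{eq: gaussian lower bound}.

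The only mildly technical point is the uniform positivity of $H(2,\cdot)$ on the compact set $\{r \leq 1\}$: one has to check by dominated convergence, after the substitution $s = r + u$, that the inner integral depends continuously on $r$ down to $r = 0$, even though the integrable singularity at $s = r$ transitions from $\sqrt{u\sinh r}$-type to $u$-type as $r \to 0$. Once this continuity is established, both bounds reduce to the direct manipulations above.
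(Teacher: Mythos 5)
Your proof is correct, but it takes a genuinely different route from the paper, most visibly for the upper bound \eqref{eq: gaussian upper bound}. You derive the off-diagonal bound directly from the explicit formula \eqref{eq: heat kernel expression} via the factorisation $\cosh s-\cosh r=2\sinh\frac{s+r}{2}\sinh\frac{s-r}{2}$ and $\sinh a\ge a$, and the substitution $v=s^2-r^2$ gives the clean estimate $H(t,z,z_0)\le (4\pi t)^{-1}e^{-t/4}e^{-r^2/(4t)}$, i.e.\ \eqref{eq: gaussian upper bound} with explicit constants $\gamma=1$, $\alpha=1/4$. The paper instead proves only the on-diagonal estimate $H(t,z,z)\lesssim t^{1/2}/f(t)$ (using $\sinh(s/2)\ge s/2$) and then invokes Grigor'yan's on-diagonal-to-off-diagonal theorem to obtain the Gaussian bound with unspecified $\alpha,\gamma$; your computation is self-contained and sharper, while the paper's route is softer and works whenever only on-diagonal bounds are available. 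For the lower bound \eqref{eq: gaussian lower bound}, the paper argues globally in $r$: bounding $\cosh s-\cosh r\le\cosh s-1=2\sinh^2(s/2)$ and $\sinh(s/2)\le\tfrac12 e^{s/2}$ gives $H(2,z,z_0)\gtrsim\int_r^\infty s e^{-s^2/2}\,ds=e^{-r^2/2}$ with no case distinction, whereas you split into $r\le 1$ (continuity and positivity on a compact set) and $r\ge 1$ (integration over the window $[r+1,r+2]$); both work, and in fact your window argument already gives a uniform lower bound for $r\le 1$ as well, so the compactness/dominated-convergence step could be dropped entirely. One small slip: for $s\in[r+1,r+2]$ one has $\frac{s+r}{2}\le r+1$, so the correct bound is $\sinh\frac{s+r}{2}\le\tfrac12 e^{r+1}$ and hence $\sqrt{\cosh s-\cosh r}\lesssim e^{(r+1)/2}$, not $e^{(r+2)/4}$; this only changes the linear term in the exponent, which is absorbed into $-\beta r^2$ exactly as you describe, so the conclusion is unaffected.
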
 We deduce the necessary condition of Theorem \ref{theo: main result} from the lemma, which will be proved afterwards. The l.h.s. of \eqref{eq: observability for specific solution} can be bounded from below as follows, with $C$ a positive constant $$\begin{aligned}\int_{\mathbb{H}^2} |H(2, z, z_0)|^2 d\vol_g(z) &\geq C \int_{\mathbb{H}^2} e^{-2\beta d_g(z,z_0)^2} d\vol_g(z).\end{aligned}$$ The integral in the right-hand-side is independent of $z_0$, as shown by the change of variables $$z' = y_0^{-1}(z - (x_0,0)).$$ Indeed, the hyperbolic distance and the measure $d\vol_g$ are invariant by scaling and horizontal translation (see for example \cite[(2.6) and (2.8)]{borthwick2016book}), so that $d_g(z, z_0) = d_g(z', (0,1))$ and $d\vol_g(z) = d\vol_g(z')$.

We now bound the r.h.s. of \eqref{eq: observability for specific solution} from above using the upper bound \eqref{eq: gaussian upper bound}. We have $$\begin{aligned}
    \int_0^1 \int_\omega |H(s+1, z, z_0)|^2 d\vol_g(z) ds &\leq C' \int_0^1 \int_\omega \frac{(s+1)\gamma}{f((s+1)\gamma)^2} e^{-2\frac{\alpha}{s+1} d_g(z,z_0)^2} d\vol_g(z) ds \\
    &\leq\frac{C'\gamma}{f(\gamma)^2} \int_\omega e^{-\alpha d_g(z,z_0)^2} d\vol_g(z).
\end{aligned}$$ We introduce a parameter $L > 0$ and split the domain of integration into $B^g_L(z_0)$ and $B^g_L(z_0)^c$: $$\begin{aligned}
    &\int_0^1 \int_\omega |H(s+1, z, z_0)|^2 d\vol_g(z) \\
    &\quad \leq \frac{C'\gamma}{f(\gamma)^2} \left( \int_{\omega \cap B^g_L(z_0)} e^{-\alpha d_g(z,z_0)^2} d\vol_g(z) + \int_{\omega \cap {B^g_L(z_0)}^c} e^{-\alpha d_g(z,z_0)^2} d\vol_g(z) \right) \\
    &\quad \leq \frac{C' \gamma}{f(\gamma)^2} \left(\vol_g(\omega \cap B^g_L(z_0)) + e^{-\frac{\alpha}{2} L^2} \int_{\mathbb{H}^2} e^{-\frac{\alpha}{2} d_g(z,z_0)^2} d\vol_g(z)\right).
\end{aligned}$$

The first term is precisely the volume we wish to bound from below to prove Assumption~\ref{ass: geodesic balls}. The integral in the second term is independent from $z_0$ for the same reason as before.

Plugging the lower and upper bounds together gives \begin{equation}
    \label{eq: lower < upper}
    C'' \leq \vol_g(\omega \cap B^g_L(z_0)) + e^{-\frac{\alpha}{2}L^2} \int_{\mathbb{H}^2} e^{-\frac{\alpha}{2} d_g(z',(0,1))^2} d\vol_g(z')
\end{equation} where $C'' = \frac{Cf(\gamma)^2}{C'\gamma} \int_{\mathbb{H}^2} e^{-2\beta d_g(z',(0,1))^2} d\vol_g(z')$ does not depend on $z_0$. We fix $L$ large enough for the second term of the r.h.s. to be smaller than $\frac{C''}{2}$. Assumption \ref{ass: geodesic balls} then holds for $R = L$ and $\delta = \frac{C''}{2}$.

\begin{proof}[Proof of lemma \ref{lemma: gaussian bounds heat kernel}]
    There remains to prove Gaussian lower and upper bounds for the heat kernel $H(t,z,z')$. We start with the lower bound. The explicit expression \eqref{eq: heat kernel expression} gives $$\begin{aligned}
        H(2,z,z') &\gtrsim \int_{d_g(z,z')}^\infty \frac{se^{-\frac{s^2}{8}}ds}{\left(\cosh(s) - \cosh(d_g(z,z'))\right)^\frac{1}{2}} ds \\
        &\gtrsim \int_{d_g(z,z')}^\infty \frac{se^{-\frac{s^2}{8}}ds}{\left(\cosh(s) - 1\right)^\frac{1}{2}} ds \\
        &\gtrsim \int_{d_g(z,z')}^\infty \frac{se^{-\frac{s^2}{8}}ds}{\sinh(s/2)}.
    \end{aligned}$$ Then since $\sinh(\frac{s}{2}) \leq \frac{1}{2} e^{\frac{s}{2}}$ and $\frac{s}{2} + \frac{s^2}{8} \leq \frac{1+s^2}{2}$, we get the lower bound $$H(2,z,z') \gtrsim \int_{d_g(z,z')}^\infty se^{-\frac{s^2}{2}} ds \gtrsim e^{-\frac{1}{2} d_g(z,z')^2}.$$

    To prove the upper bound, we employ the strategy developed by Grigor'yan in \cite[Section 5.1]{grigor'yan1997gaussianbounds,grigor'yan1999estimates_heatkernel_mfds}, which consists in deducing off-diagonal estimates from diagonal estimates. $H$ satisfies the following diagonal estimate: \begin{equation}
        \label{eq: diagonal estimate}
        \begin{aligned}
        H(t,z,z) &= \frac{\sqrt{2}}{(4\pi)^\frac{3}{2}f(t)} \int_0^\infty \frac{s e^{-\frac{s^2}{4t}}}{(\cosh(s) - 1)^\frac{1}{2}}ds \\
        &\lesssim \frac{1}{f(t)} \int_0^\infty \frac{s e^{-\frac{s^2}{4t}}}{\sinh(\frac{s}{2})} ds \\
        &\lesssim \frac{1}{f(t)} \int_0^\infty e^{-\frac{s^2}{4t}} ds \text{ as } \sinh\left(\frac{s}{2}\right) \geq \frac{s}{2} \text{ for positive } s \\
        &\lesssim \frac{t^\frac{1}{2}}{f(t)}.
        \end{aligned}
    \end{equation} where $f(t)$ is the function introduced in Lemma \ref{lemma: gaussian bounds heat kernel}. The estimate is independent of $z$.

    The upper bound can then be derived from \cite[Theorem 1.1]{grigor'yan1997gaussianbounds}, the assumption of which is satisfied by $\frac{f(t)}{t^\frac{1}{2}} = t e^{\frac{t}{4}}$ (see Section 2, Example 2.). Thus, there exist some positive constants $\alpha, K, \gamma$ such that for any $(z, z') \in \left(\mathbb{H}^2\right)^2$, one has $$H(t,z,z') \leq \frac{K(\gamma t)^\frac{1}{2}}{f(\gamma t)} e^{-\frac{\alpha}{t}d_g(z,z')^2}.$$ This completes the proof of the lemma.
\end{proof}

\appendix

\section{Logunov and Malinnikova's propagation of smallness inequality}
\label{sec: Appendix Logunov}

We recall the precise statement of the estimate by Logunov and Malinnikova which we use in the proof of Theorem \ref{theo: main result}, namely \cite[Theorem 5.1]{logunovmalinnikova2018qtttve_propag_smallness}: 

\begin{theorem}[\cite{logunovmalinnikova2018qtttve_propag_smallness}]
\label{theo: logunov-malinnikova}
    Consider $\Omega$ a bounded domain in $\mathbb{R}^d$. There exists a constant $c \in (0,1)$ that depends only on the dimension $d$ such that the following holds:
    take any $u$ solution of an elliptic equation in divergence form \begin{equation}
        \Div(A\nabla u) = 0 \text{ over } \Omega
    \end{equation} where $A = (a_{i,j}(x))_{i,j}$ is a symmetric, uniformly elliptic matrix with Lipschitz entries: $$\Lambda_1^{-1} |\zeta|^2 \leq \langle A \zeta, \zeta \rangle \leq \Lambda_1 |\zeta|^2, \quad |a_{i,j}(x) - a_{i,j}(y) | \leq \Lambda_2 |x-y|.$$ Let $m, \delta, \rho$ be positive numbers, and assume sets $E, \mathcal{K} \subset \Omega$ satisfy $$C_\mathcal{H}^{n-1-c+\delta}(E) > m, \quad \dist(E, \partial\Omega) > \rho, \dist(\mathcal{K}, \partial\Omega) > \rho.$$ Then there exist $C, \alpha > 0$ depending only on $m, \delta, \rho, \Lambda_1, \Lambda_2, \Omega$ such that $$\sup_{\mathcal{K}} |\nabla u| \leq C \sup_{E}|\nabla u|^\alpha \sup_{\Omega}|\nabla u|^{1-\alpha}.$$ 
\end{theorem}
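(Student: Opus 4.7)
This theorem is established in \cite{logunovmalinnikova2018qtttve_propag_smallness}; I sketch the main ingredients of their argument as I would organise them. The overall plan is to combine a quantitative unique continuation estimate (a three-ball inequality) for solutions of divergence-form elliptic equations with a refined geometric--combinatorial iteration on dyadic cubes. This last step is what allows one to push the ``small set'' $E$ below codimension one in the Hausdorff scale.

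My first step would be to set up the Garofalo--Lin frequency function $N(r)$ adapted to $\Div(A\nabla\,\cdot\,)$: the numerator is a local weighted Dirichlet energy of $u$ over $B_r$, the denominator a weighted boundary $L^2$-norm of $u$ on $\partial B_r$. Under the Lipschitz assumption on $A$, differentiating $N$ yields almost-monotonicity, and from this a doubling inequality of the form $\int_{B_{2r}} u^2 \lesssim 2^{CN(R)} \int_{B_r} u^2$. Standard three-ball (Hadamard) inequalities for $u$ follow, and interior Caccioppoli together with De Giorgi/Moser regularity transfer them to $|\nabla u|$. Chaining three-ball inequalities along paths inside $\Omega$ then proves the conclusion whenever $E$ contains a Euclidean ball, and a slicing/covering argument extends it to any $E$ of positive $(d-1)$-dimensional Hausdorff content.

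The genuinely new step, and the one I expect to be the main obstacle, is the passage to Hausdorff content in dimension $d-1-c+\delta$ for a small dimension-dependent constant $c>0$. The strategy is a dyadic descent on cubes inside $\Omega$: at each scale, the frequency bound is used to classify cubes as \emph{good} (doubling constant uniformly controlled) or \emph{bad} (strictly higher frequency, hence rare by a pigeonhole on the monotone frequency). Iterating this dichotomy, the key geometric lemma of Logunov (the ``simplex lemma'') asserts that a set of positive $(d-1-c+\delta)$-Hausdorff content at scale $r$ cannot be trapped in an $O(r^{1+\varepsilon})$-neighbourhood of any hyperplane, and hence must contain $d+1$ points spanning a non-degenerate simplex of controlled diameter. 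Applying the three-ball inequality successively with the vertices of such a simplex as anchor points propagates smallness from $E$ into an ambient ball of comparable size; carrying out the counting argument so that the error from bad cubes does not swamp the gain is the delicate combinatorial core.

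The remaining step is bookkeeping: iterate the construction to reach any compact $\mathcal{K} \subset \Omega$ while carefully tracking the exponent $\alpha \in (0,1)$, which depends only on $m, \delta, \rho, \Lambda_1, \Lambda_2, \Omega$ through the frequency bounds and the number of dyadic steps required. Interior elliptic regularity is then invoked once more to convert the resulting $L^2$ smallness near $E$ into a supremum bound for $|\nabla u|$ on $\mathcal{K}$, yielding the stated H\"older-type interpolation between $\sup_E |\nabla u|$ and $\sup_\Omega |\nabla u|$.
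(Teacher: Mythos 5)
The paper does not prove this statement at all: Theorem \ref{theo: logunov-malinnikova} is quoted verbatim from Logunov--Malinnikova \cite{logunovmalinnikova2018qtttve_propag_smallness} (their Theorem 5.1), and the appendix explicitly states that it contains no new mathematical content. So there is no internal proof to compare your attempt against; the only question is whether your sketch would stand on its own as a proof, and it would not. What you have written is an accurate \emph{table of contents} of the original argument --- frequency function and almost-monotonicity, doubling and three-ball inequalities, a good/bad dyadic cube dichotomy driven by the doubling index, and Logunov-type geometric--combinatorial lemmas --- but the two steps that constitute the actual theorem are only named, not carried out: (i) the combinatorial induction on the doubling index that yields propagation of smallness from sets of Hausdorff dimension strictly below $d-1$ (you yourself flag it as ``the delicate combinatorial core''), and (ii) the passage from statements about $u$ to statements about $|\nabla u|$. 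On point (ii) your sketch is also misleading: for Lipschitz variable coefficients the partial derivatives of $u$ do not solve the same divergence-form equation, and Caccioppoli/De Giorgi--Moser only bound the gradient by the function on a larger ball, not the reverse, so three-ball inequalities do not simply ``transfer'' to $|\nabla u|$; the gradient version is precisely where Logunov and Malinnikova need additional, specific arguments, and it is the gradient version (with $E$ of dimension $d-1-c+\delta$) that the present paper uses. Likewise, chaining three-ball inequalities handles $E$ containing a ball, but already the extension to sets of positive measure --- let alone lower-dimensional sets --- requires the doubling-index combinatorics, not a routine slicing/covering argument.

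In the context of this paper the correct ``proof'' of the statement is the citation itself; if you intend to supply an actual proof rather than a pointer, you would need to reproduce the quantitative core of \cite{logunovmalinnikova2018qtttve_propag_smallness} (doubling-index estimates, the hyperplane/simplex lemmas, and the gradient-specific analysis), which your outline defers entirely.
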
 Taking $\delta = c$, the result holds if $E$ is a set of Hausdorff dimension $n-1$.

\section{Spectral estimate implies observability}
\label{sec: Appendix observability}

In this Appendix, we prove that on any set where the spectral estimate \eqref{eq: main result estimate} holds, solutions of the heat equation \eqref{eq: heat equation} satisfy the observability inequality \eqref{eq: observability}. The link between both was established by Miller \cite{miller2010directlebeaurobbiano} while Phung and Wang established observability from any set of positive measure \cite{phungwang2013observability}. The present proof follows closely that of \cite[Lemmas 2.3, 2.4]{wangwangzhangzhang2019spectralineq_heat} and we present it for the sake of completeness.

The proof is divided into two parts. In the first one, we use the spectral inequality to prove the interpolation estimate \eqref{eq: Hölder}. Differences with \cite{wangwangzhangzhang2019spectralineq_heat} are that the Plancherel-based estimates for Fourier multipliers are replaced by inequality \eqref{eq: functional calculus bound}, and that the authors prove the inequality for any $\theta \in (0,1)$ while we focus on the case $\theta = 2$. In the second part, observability is deduced from the interpolation inequality using a telescoping series method: there we follow \cite{wangwangzhangzhang2019spectralineq_heat} in the simpler case where $F$ equals $(0,T)$ rather than any positive-measure subset.

\begin{proof}[Proof of Lemma \ref{lemma: observability}]
    Assume that \eqref{eq: main result estimate} holds for the set $\omega$ and consider a solution $u$ of the heat equation \eqref{eq: heat equation} on $\mathbb{H}^2$. Fix $T > 0$ and denote $u_0 = u(0,\cdot)$. Then, $$u(T, z) = e^{T\Delta_g}u_0(z), \quad z \in \mathbb{H}^2.$$ We first prove that $u$ satisfies the following Hölder-type inequality for some constant $\Tilde{C}$: \begin{equation}
        \label{eq: Hölder}
        \int_{\mathbb{H}^2} |u(T,z)|^2 d\vol_g \leq \exp\left(\Tilde{C}\left(1+\frac{1}{T}\right)\right) \left( \int_\omega |u(T,z)|^2 d\vol_g\right)^\frac{1}{2} \left(\int_{\mathbb{H}^2} |u_0(z)|^2 d\vol_g \right)^\frac{1}{2}.
    \end{equation}
    
    Denote $\Pi_{>\Lambda}$ the spectral projector defined by $$\Pi_{>\Lambda} u = \int_\Lambda^{+\infty} dm_\lambda u, \quad u \in L^2_g(\mathbb{H}^2),$$ then $u_0 = \Pi_\Lambda u_0 + \Pi_{>\Lambda} u_0$. By the spectral estimate \eqref{eq: main result estimate}, there exists some $K > 0$ such that $$\begin{aligned}
        \int_{\mathbb{H}^2} |u(T,z)|^2 d\vol_g &\leq 2 \int_{\mathbb{H}^2} |e^{T\Delta_g} \Pi_\Lambda u_0(z)|^2 d\vol_g + 2 \int_{\mathbb{H}^2} |e^{T\Delta_g} \Pi_{>\Lambda} u_0(z)|^2 d\vol_g \\
        &\leq 2Ke^{K\Lambda} \int_\omega |e^{T\Delta_g} \Pi_\Lambda u_0(z)|^2 d\vol_g + 2 \int_{\mathbb{H}^2} |e^{T\Delta_g} \Pi_{>\Lambda} u_0(z)|^2 d\vol_g \\
        &\leq 4Ke^{K\Lambda} \int_\omega |e^{T\Delta_g} u_0(z)|^2 d\vol_g + (2+4Ke^{K\Lambda}) \int_{\mathbb{H}^2} |e^{T\Delta_g} \Pi_{>\Lambda} u_0(z)|^2 d\vol_g.
    \end{aligned}$$ Since $\int_{\mathbb{H}^2}|e^{T\Delta_g} \Pi_{>\Lambda} u_0(z)|^2 d\vol_g \leq e^{-T\Lambda^2} \|\Pi_{>\Lambda} u_0\|_{L^2_g(\mathbb{H}^2)}^2 \leq e^{-T\Lambda^2} \|u_0\|_{L^2_g(\mathbb{H}^2)}^2$, it follows that \begin{equation}
        \begin{aligned}\int_{\mathbb{H}^2} |u(T,z)|^2 d\vol_g &\leq 4Ke^{K\Lambda} \int_\omega |e^{T\Delta_g} u_0(z)|^2 d\vol_g + (2+4Ke^{K\Lambda}) e^{-T\Lambda^2} \|u_0\|_{L^2_g(\mathbb{H}^2)}^2 \\
        &\leq 6K \left(e^{K\Lambda} \int_\omega |e^{T\Delta_g} u_0(z)|^2 d\vol_g + e^{K\Lambda - T\Lambda^2} \|u_0\|_{L^2_g(\mathbb{H}^2)}^2\right)\end{aligned}
    \end{equation} Fix $\eta \in (0,1)$ and $\Lambda$ such that $e^{K\Lambda - T\Lambda^2} = \eta$, then one has $$\Lambda = \frac{K + \sqrt{K^2 + 4T\log\frac{1}{\eta}}}{2T} \leq \frac{1}{T} \left(K + \sqrt{T \log\frac{1}{\eta}}\right).$$ Thus, $$\begin{aligned}
        e^{K\Lambda} &\leq \exp\left(\frac{K^2}{T}\right) \exp\left(\frac{K}{\sqrt{T}}\sqrt{\log\frac{1}{\eta}}\right) \\
        &\leq \exp\left(\frac{K^2}{T}\right) \exp\left(\frac{K^2}{2T} +\frac{1}{2}\log\frac{1}{\eta}\right) \\
        &\leq \exp\left(\frac{3K^2}{2T}\right) \eta^{-1}.
    \end{aligned}$$
    Setting $$\eta = \left(\frac{\int_\omega |u(T,z)|^2 d\vol_g}{\int_{\mathbb{H}^2} |u_0(z)|^2 d\vol_g}\right)^\frac{1}{2} < 1$$ yields estimate \eqref{eq: Hölder}.

    Then, by \eqref{eq: Hölder} and translation in time, one gets that for $0 < t_1 < t_2$ and $\varepsilon > 0$, \begin{equation} \label{eq: translated in time}
    \int_{\mathbb{H}^2} |u(t_2,z)|^2 d\vol_g \leq \frac{1}{\varepsilon}e^{2\Tilde{C}\left(1+\frac{1}{t_2-t_1}\right)} \int_\omega |u(t_2,z)|^2 d\vol_g + \varepsilon \int_{\mathbb{H}^2} |u(t_1,z)|^2 d\vol_g.\end{equation} Now set $l_1 = T, \lambda \in (\frac{1}{\sqrt{2}},1)$ and $l_{m+1} = \lambda^m l_1$ for $m \geq 0$. Take an integer $m$ and some $s$ such that $$0 < l_{m+2} < l_{m+1} \leq s < l_m \leq T,$$ then by \eqref{eq: translated in time}, $$\int_{\mathbb{H}^2} |u(l_m,z)|^2 d\vol_g \leq \frac{1}{\varepsilon} e^{2\Tilde{C}\left(1 + \frac{1}{l_{m+1} - l_{m+2}}\right)} \int_\omega|u(s,z)|^2 d\vol_g + \varepsilon \int_{\mathbb{H}^2} |u(l_{m+2},z)|^2 d\vol_g.$$ 
    Integrating over $s \in (l_{m+1},l_m),$ we get \begin{multline}\int_{\mathbb{H}^2} |u(l_m,z)|^2 d\vol_g \\ \leq \varepsilon \int_{\mathbb{H}^2} |u(l_{m+2},z)|^2 d\vol_g + \frac{1}{\varepsilon} \frac{1}{l_m-l_{m+1}} e^{2\Tilde{C}\left(1 + \frac{1}{l_{m+1} - l_{m+2}}\right)} \int_{l_{m+1}}^{l_m}\int_\omega|u(s,z)|^2 d\vol_g ds.\end{multline} Using the inequality $l_m - l_{m+1} \geq e^{-\frac{1}{l_m-l_{m+1}}}$, and the identities $$l_m-l_{m+1} = \frac{1}{1+\lambda}(l_m - l_{m+2}), \quad l_{m+1} - l_{m+2} = \frac{\lambda}{1+\lambda} (l_m-l_{m+2}),$$ we obtain  \begin{multline}
    \label{eq: obs l_m à télescoper}
    \int_{\mathbb{H}^2} |u(l_m,z)|^2 d\vol_g \\ \leq \varepsilon \int_{\mathbb{H}^2} |u(l_{m+2},z)|^2 d\vol_g + \frac{1}{\varepsilon} e^{2\Tilde{C}} e^{\frac{C'}{l_m - l_{m+2}}} \int_{l_{m+1}}^{l_m}\int_\omega|u(s,z)|^2 d\vol_g ds\end{multline} with $C' = 1 + \lambda + \frac{2\Tilde{C}(1+\lambda)}{\lambda}$. \eqref{eq: obs l_m à télescoper} can be rewritten as \begin{multline}
        \varepsilon e^{-\frac{C'}{l_m - l_{m+2}}} \int_{\mathbb{H}^2} |u(l_m,z)|^2 d\vol_g - \varepsilon^2 e^{-\frac{C'}{l_m - l_{m+2}}} \int_{\mathbb{H}^2} |u(l_{m+2},z)|^2 d\vol_g \\ \leq e^{2\Tilde{C}} \int_{l_{m+1}}^{l_m}\int_\omega|u(s,z)|^2 d\vol_g ds.
    \end{multline} Setting $\mu:= \frac{1}{2-\lambda^{-2}} > 1$ and $\varepsilon = \exp\left[-\frac{(\mu-1)C'}{l_m-l_{m+2}}\right]$ yields \begin{multline}
        e^{-\frac{\mu C'}{l_m - l_{m+2}}} \int_{\mathbb{H}^2} |u(l_m,z)|^2 d\vol_g - e^{-\frac{(2\mu-1)C'}{l_m - l_{m+2}}} \int_{\mathbb{H}^2} |u(l_{m+2},z)|^2 d\vol_g \\ \leq e^{2\Tilde{C}} \int_{l_{m+1}}^{l_m}\int_\omega|u(s,z)|^2 d\vol_g ds.
    \end{multline} Now $$\frac{2\mu-1}{l_m-l_{m+2}} = \frac{\mu}{\lambda^2 (l_m - l_{m+2})} = \frac{\mu}{l_{m+2}-l_{m+4}},$$ hence \begin{multline}
        e^{-\frac{\mu C'}{l_m-l_{m+2}}} \int_{\mathbb{H}^2} |u(l_m,z)|^2 d\vol_g - e^{-\frac{\mu C'}{l_{m+2}-l_{m+4}}} \int_{\mathbb{H}^2} |u(l_{m+2},z)|^2 d\vol_g \\ \leq e^{2\Tilde{C}} \int_{l_{m+1}}^{l_m} \int_\omega |u(s,z)|^2 d\vol_g ds.
    \end{multline} Summing this inequality over all odd integers $m$, we obtain $$\
        e^{\frac{\mu C'}{l_1-l_3}} \int_{\mathbb{H}^2} |u(l_1,x)|^2 dx \leq e^{2\Tilde{C}} \int_0^1 \int_\omega |u(s,z)|^2 d\vol_g ds,$$ which in turn gives the observability estimate \eqref{eq: observability}.
\end{proof}

\section*{Acknowledgements}

The author would like to thank Nicolas Burq and Alix Deleporte for insightful discussions about the topic of this article. This work was funded by a CDSN PhD grant from Ecole Normale Supérieure Paris-Saclay via the Hadamard Doctoral School of Mathematics (EDMH).

\bibliographystyle{alpha}
\bibliography{bibli}

\end{document}